\documentclass[12pt]{article}
\usepackage{}
\usepackage{mathrsfs}
\usepackage{amsfonts}
\usepackage{amssymb,amsmath}
\usepackage{cite}
\usepackage{cases}
\usepackage{amsthm}
\usepackage{CJK}

\allowdisplaybreaks
\pagestyle{plain}
\oddsidemargin -25pt
\evensidemargin -25pt
\topmargin -40pt
\textwidth 6.5truein
\textheight 9.35truein
\parskip .01 truein
\baselineskip 6pt
\def\sc{\scriptstyle}

\def\UU{{\frak U}}
\def\SS{{\cal S}}
\def\Der{{\rm Der}}
\def\Inn{{\rm Inn}}
\def\Ker{{\rm Ker}}
\def\Im{{\rm Im}}
\def\cl{\centerline}
\def\ll{\leftline}
\def\rar{\rightarrow}

\def\vs{\vspace*}
\def\ni{\noindent}
\def\VV{\mathcal {V}}
\def\Z{\mathbb{Z}}
\def\C{\mathbb{C}}
\def\QED{\hfill$\Box$}
\def\d{\lambda}
\def\e{\mu}
\def\f{\nu}
\def\g{\eta}
\def\h{\xi}
\def\G{\zeta}
\def\v{\gamma}
\def\BZ{\mathbf{Z}}

\def\D{\Delta}
\def\LL{{\cal L}}
\def\sl{\frak{sl}_2(\mathbb{C}_q)}

\def\wsl{\mbox{\footnotesize$\widetilde{\frak{sl}_2(\mathbb{C}_q)}$}}

\def\Cq{\cq[x^{\pm1},y^{\pm1}]}
\def\sm{\!\setminus\!}

\def\cq{{\cal C}_q}
\def\bz{{\bf 0}}
\def\bk{{\bf k}}

\def\bm{{\bf m}}
\def\DD{{d}}
\def\dir{\mathcal {D}}
\def\bn{{\bf n}}

\def\EE{e}
\def\FF{f}
\def\GG{g}
\def\HH{h}
\def\WL{\widetilde{\cal{L}}}

\def\em{e_{\bf m}}

\def\fm{f_{\bf m}}

\numberwithin{equation}{section}
\newtheorem{theo}{Theorem}[section]
\newtheorem{defi}[theo]{Definition}
\newtheorem{coro}[theo]{Corollary}
\newtheorem{lemm}[theo]{Lemma}
\newtheorem{prop}[theo]{Proposition}
\newtheorem{rema}[theo]{Remark}
\newtheorem{clai}{Claim}

\begin{document}
\begin{CJK*}{GBK}{song}
\cl{{\large\bf Lie bialgebra structures on the extended affine
Lie algebra $\wsl$
\,$^*$}\footnote {$\!\!\!\!\!\!\!^*\,$Supported by NSF grants (No 10825101, 11101056) and the China Postdoctoral Science Foundation Grant (No 201003326)\\
\indent}}\vs{6pt}

\cl{Ying Xu$^{a,b}$, Junbo Li$^{c}$}\vs{6pt}

\cl{\footnotesize $^{a}$ School of Mathematics of Hefei University of Technology, Hefei 230009, China}

\cl{\footnotesize $^{b}$ School of Mathematical Sciences, University of Science and Technology of China, Hefei 230026, China}

\cl{\footnotesize $^{c}$ School of Mathematics and Statistics, Changshu Institute of Technology, Changshu 215500, China}

\cl{\footnotesize Email: xying@mail.ustc.edu.cn, sd\_junbo@163.com}
\vs{8pt}

{\small
\parskip .005 truein
\baselineskip 3pt \lineskip 3pt

\noindent{{\bf Abstract.}\ \ 
Lie bialgebra structures on the extended affine Lie algebra $\wsl$
are investigated.
In particular, all Lie bialgebra structures on $\wsl$
are shown to be triangular coboundary. This result is obtained by employing some techniques, which may also work for more general extended affine Lie algebras, to prove the triviality of
the first cohomology group of $\wsl$ with coefficients in the tensor product of its adjoint module, namely, $H^1(\wsl,\wsl\otimes\wsl)=0$.
\vs{2pt}

\noindent{\bf Key words:} Lie bialgebras, Yang-Baxter equations, the extended affine Lie algebra $\wsl$
}\vs{5pt}

\noindent{\it Mathematics Subject Classification (2010):} 17B05,
17B37, 17B62, 17B67.}
\parskip .001 truein\baselineskip 6pt \lineskip 6pt
\vs{10 pt}

\ll{\bf\large{1\ \ \
Introduction}}\setcounter{section}{1}\setcounter{equation}{0}\setcounter{theo}{0}\vs{4 pt}

\noindent It is well known that extended affine Lie algebras (EALAs), first introduced in \cite{HT} in the sense of quasi-simple Lie algebras and systematically investigated in \cite{AABGP}, can be regarded as  higher dimensional generalizations of affine Lie algebras. One of the main features of such Lie algebras is that they are graded by finite root systems possessing nondegenerate symmetric invariant bilinear forms. The EALAs, including toroidal Lie algebras and the central extensions of the matrix Lie algebras coordinated by quantum tori as   well-known examples, have been subjects of intensive studies during the last decade (e.g., \cite{AG,AABGP,B,BGK,BGKE,BL,G1,G2,GZ,HT,LS,M} and references therein). More precisely, the structure  and representation theories of EALAs have been well developed and  received much attention via various module realizations (e.g., \cite{BGK,BGKE,G1,G2,GZ,JM,LS}).
Despite of the fact that much progress has been achieved on these aspects, it seems that, to the best of our knowledge,
not much has yet been known on the aspect of quantum groups associated with EALAs.

Quantizing Lie algebras is an important approach to produce
new quantum groups. To quantize a Lie algebra,
it is necessary to first know its bialgebra structures. Therefore, investigating Lie bialgebra structures is an important problem and it is our goal to study Lie bialgebra structures on EALAs.
In the present paper, we initiate the study by considering Lie bialgebra structures on the extended affine Lie algebra $\wsl$. We wish the study may provide us an approach to investigate  Lie bialgebra structures on more general EALAs.

In order to obtain solutions of quantum Yang-Baxter equations, which plays important roles in mathematical physics,  Drinfeld \cite{D1,D2} introduced the notion of Lie bialgebras.  Lie bialgebra structures on the Witt algebra and the Virasoro algebra were considered and classified in \cite{T,NT}. Since then, Lie bialgebra structures as well as their quantizations on some infinite-dimensional graded Lie algebras, in particular those containing the Virasoro algebra (or its $q$-analog) have been extensively studied (e.g., \cite{CS,LSX,HLS,Mi,SS,SS1,T,WSS,WSS1,YS}).
We have noticed that, for the above-mentioned infinite-dimensional graded Lie algebras,
 the Virasoro algebra plays a very crucial role in determining their bialgebra structures because of the fact that the modules of the intermediate series of the Virasoro algebra have relatively simple module structures (see, e.g., \cite{SZ}). However, for our case here, due to the fact that the Lie algebra $\wsl$ has a rather complicated interior algebraic structure, some techniques used in determining  bialgebra structures of the above-mentioned   Lie algebras cannot be applied to the case for $\wsl$.
More precisely, when we determine the derivations from  $\wsl$  to its tensor product (which is indispensable in determining  bialgebra structures), we encounter the following 2 difficulties:
Firstly, we have the difficulty to solve some complicated systems of equations with  lots of undetermined variables (see, e.g., \eqref{equa-g01-0}--\eqref{equa-gh-ef}). Secondly, we have the difficulty to choose some suitable inner derivation, denoted $u_{\rm
inn}$, apart from some obvious ones (such as those with $u$ being in \eqref{Inn-0}),  so that when we subtract a derivation $\dir_0$ by the chosen inner derivation $u_{\rm
inn}$, the resulting derivation $\dir_0-u_{\rm
inn}$ would be as simple as possible.
In dealing with problems of determining derivations in this paper, we overcome the first difficulty by employing some new techniques and arguments (cf.~proofs of Claims \ref{claim1} and \ref{claim2}).
Then we overcome the second difficulty by finding two rather complicated inner derivations $u_{\rm
inn},\,v_{\rm
inn}$ in two steps, so that $\dir_0-u_{\rm
inn}-v_{\rm
inn}$ has a relatively simple form (cf.~\eqref{inner-u} and  \eqref{inner-v}).
We would like to remark that our techniques and arguments may be possibly generalized to more general EALAs. This is also one of our motivations to present the results here.

The main result of this paper can be formulated as follows.
\begin{theo}\label{main}
Every Lie bialgebra structure on $\wsl$ is triangular coboundary.
\end{theo}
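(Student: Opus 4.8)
My plan follows the route that is by now standard for results of this kind, with essentially all of the work concentrated in a single cohomology computation. Recall that a Lie bialgebra structure on $\wsl$ is a linear map $\Delta\colon\wsl\to\wsl\otimes\wsl$ which is a $1$-cocycle for the diagonal adjoint action of $\wsl$ on $\wsl\otimes\wsl$, takes its values in $\Im(1-\tau)$ ($\tau$ the switching map), and satisfies the co-Jacobi identity. For $r\in\wsl\otimes\wsl$, set $\Delta_r(x)=x\cdot r:=(\operatorname{ad}_x\otimes 1+1\otimes\operatorname{ad}_x)(r)$; by Drinfeld's coboundary theory \cite{D1,D2}, $\Delta_r$ is always a $1$-cocycle, it takes values in $\Im(1-\tau)$ exactly when $r+\tau(r)$ is $\wsl$-invariant, and, granting that, it satisfies co-Jacobi exactly when $c(r):=[r^{12},r^{13}]+[r^{12},r^{23}]+[r^{13},r^{23}]$ is $\wsl$-invariant, with $(\wsl,\Delta_r)$ triangular precisely when $c(r)=0$. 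I would thus argue in three stages: (i) prove $H^1(\wsl,\wsl\otimes\wsl)=0$, so that an arbitrary Lie bialgebra $\Delta$ must equal $\Delta_r$ for some $r$; (ii) normalize $r$ to be skew-symmetric; (iii) show $c(r)=0$.

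\textbf{Stages (ii) and (iii), granting $H^1=0$.} Writing $\Delta=\Delta_r$, the identity $\tau\circ\Delta_r=-\Delta_r$ forces $x\cdot(r+\tau(r))=0$ for all $x$, hence $\Delta_{r+\tau(r)}=0$ and $\Delta=\Delta_s$ with $s:=\tfrac12(r-\tau(r))$, which is skew. Co-Jacobi then makes $c(s)$ an $\wsl$-invariant element of $\wsl\otimes\wsl\otimes\wsl$, and skewness of $s$ puts $c(s)$ in $\Lambda^3\wsl$. It remains to see such a $c(s)$ must vanish: applying $\operatorname{ad}$ of the two degree derivations shows $c(s)$ is homogeneous of total $\Z^2$-degree $\bz$, and a direct analysis of the $\wsl$-invariants so constrained — similar in spirit to, but far shorter than, stage (i) — yields $c(s)=0$. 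Then $(\wsl,\Delta)=(\wsl,\Delta_s)$ is triangular coboundary, which proves the theorem modulo stage (i).

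\textbf{Stage (i): $H^1(\wsl,\wsl\otimes\wsl)=0$, the heart of the matter.} Let $\dir\in\Der(\wsl,\wsl\otimes\wsl)$ and decompose $\dir=\sum_{\bm\in\Z^2}\dir_{\bm}$ into $\Z^2$-homogeneous components, each of which is again a derivation. Evaluating the cocycle identity on a suitable combination $d$ of the two degree derivations shows that, for $\bm\ne\bz$, $\dir_{\bm}(x)$ is a fixed nonzero scalar multiple of $x\cdot\dir_{\bm}(d)$; thus $\dir_{\bm}$ is the inner derivation $x\mapsto x\cdot t_{\bm}$ for a suitable $t_{\bm}\in\wsl\otimes\wsl$, and since $\dir(d)$ has only finitely many homogeneous components, subtracting an inner derivation reduces us to $\dir=\dir_0$ of degree $\bz$. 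Next I would subtract the obvious inner derivations coming from the elements $u$ exhibited in \eqref{Inn-0}, and then — in two successive steps, following the roadmap of the introduction — the two less obvious inner derivations $u_{\rm inn}$ and $v_{\rm inn}$ of \eqref{inner-u} and \eqref{inner-v}, so that $\dir_0-u_{\rm inn}-v_{\rm inn}$ is supported on only a handful of the defining elements. Evaluating the cocycle identity $\dir([a,b])=a\cdot\dir(b)-b\cdot\dir(a)$ on a finite homogeneous generating set of $\wsl$ then produces the coupled linear systems \eqref{equa-g01-0}--\eqref{equa-gh-ef} for the remaining unknown coefficient functions; solving them — the content of Claims \ref{claim1} and \ref{claim2} — gives $\dir_0-u_{\rm inn}-v_{\rm inn}=0$, so $\dir_0$, and hence $\dir$, is inner.

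\textbf{The main obstacle.} All of the difficulty, and the novelty, lies in the degree-$\bz$ case of stage (i), for the two reasons flagged in the introduction: one must first \emph{guess} the right inner derivations $u_{\rm inn}$ and $v_{\rm inn}$, without which the residual equations are unmanageable, and then actually solve large over-determined linear systems in many unknown structure functions — precisely what the new arguments of Claims \ref{claim1} and \ref{claim2} achieve. By contrast, the normalization in stage (ii), the removal of the nonzero-degree components of $\dir$, and the invariant computation in stage (iii) are all routine.
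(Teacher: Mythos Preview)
Your proposal is correct and tracks the paper's proof closely; the core work in both is the degree-$\bz$ computation you outline for stage~(i). Two small remarks. First, your stage~(ii) differs slightly from the paper: you symmetrize, replacing $r$ by $s=\tfrac12(r-\tau(r))$, while the paper instead proves by a direct computation (Lemma~\ref{lemma5v}) that $r$ itself already lies in $\Im(1-\tau)$; your route is a shade cleaner, and both then finish via the ``no nonzero invariants'' fact (Lemma~\ref{Legr}, equivalently Corollary~\ref{coro1}) to force $c=0$. Second, your ordering in stage~(i) is slightly off relative to the paper: one first writes out $\dir_\bz(\GG_{0,1})$ and $\dir_\bz(\HH_{1,0})$ and derives \eqref{equa-g01-0}--\eqref{equa-gh-ef}, proves Claim~\ref{claim1} from \eqref{equa-ghh-hgg}, and \emph{only then} subtracts $u_{\rm inn},v_{\rm inn}$ (whose defining coefficients in \eqref{inner-u}--\eqref{inner-v} are precisely the $\g^{gh}_{m,n},\h^{gh}_{0,n}$ just computed, so they cannot be subtracted beforehand), followed by Claim~\ref{claim2}, several further reductions, and a separate treatment of the negative half (Lemma~\ref{lemm-L-}) before $\dir_\bz$ is finally shown to vanish.
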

\vs{5pt}

\ll{\bf\large{2\ \ \
Preliminary}}\setcounter{section}{2}\setcounter{equation}{0}\setcounter{theo}{0}\vs{4pt}

\noindent The extended affine Lie algebra $\wsl$ 
is essentially a loop algebra with the algebra of Laurent polynomials
replaced by a quantum torus (cf. \cite{MY}). We now give the precise definition below. Let $q$ be a nonzero complex number which is not a root of unity, and
let $\C_q=\Cq$ be the $\C$-algebra  defined by the
generators $x^{\pm1}$, $y^{\pm1}$ and
relations $x^{\pm1}x^{\mp1}=1$, $y^{\pm1}y^{\mp1}=1$, $yx=qxy$.
First, we need to recall the definition of the Lie algebra $\LL=\sl$.
Denote by $E_{ij}$ the $2\times 2$ matrix with the $(i,j)$-entry being
equal to 1 and others being $0$ for $i,j=1,2$.
We use $\Z^*$, $\Z^+$ and $\Z^-$ to  denote respectively the sets of all nonzero,
 nonnegative and nonpositive integers. We also denote
 $\bz=(0,0)$, $\BZ=\Z\times\Z$ and $\BZ^*=\BZ\sm\{\bz\}$. Then the following elements with $\bm=(m_1,m_2)\in\BZ$, $\bk=(k_1,k_2)\in\BZ^*$,
\begin{eqnarray*}
&&e_{\bm}=
E_{12}x^{m_1}y^{m_2},\ \ \ f_{\bm}=
E_{21}x^{m_1}y^{m_2},\\
&&d=E_{11}-E_{22},\ \ \ \ \ \ \  g_{\bk}=E_{11}x^{k_1}y^{k_2},\ \ \ \ \ \ \ \
h_{\bk}=E_{22}x^{k_1}y^{k_2},
\end{eqnarray*}
form a basis of $\LL$  satisfying the following relations,
for $\bm'=(m'_1,m'_2)\in\BZ$, $\bk'=(k'_1,k'_2)\in\BZ^*$,
\begin{eqnarray}\label{re-ss}
&&[\em,e_{\bm'}]=[\fm,f_{\bm'}]=[d,h_{\bk}]=[d,g_{\bk}]=[g_{\bk},h_{\bk'}]=0,\nonumber\\
&&[g_{\bk},\em]=q^{k_2m_1}e_{\bk+\bm},\ \ [h_{\bk},\em]=-q^{k_1m_2}e_{\bk+\bm},\ \ [d,\em]=2\em,\nonumber\\
&&[h_{\bk},\fm]=q^{k_2m_1}f_{\bk+\bm},\ \ [g_{\bk},\fm]=-q^{k_1m_2}f_{\bk+\bm},\ \ [d,\fm]=-2\fm,\nonumber\\
&&[\em,f_{-\bm}]=q^{m_2m'_1}d,\mbox{ \ \ \ }
[\em,f_{\bm'}]=
q^{m_2m'_1}g_{\bm+\bm'}-q^{m'_2m_1}h_{\bm+\bm'}\mbox{ if} \ \bm+\bm'\neq\bz,
\nonumber\\
&&[g_{\bk},g_{\bk'}]=
(q^{k_2k'_1}-q^{k'_2k_1})g_{\bk+\bk'}, \ \ \ \ \ \
[h_{\bk},h_{\bk'}]=
(q^{k_2k'_1}-q^{k'_2k_1})h_{\bk+\bk'}.
\end{eqnarray}
Thus,  $\LL=\oplus_{(m_1,m_2)\in\BZ}\LL_{m_1,m_2}$ is $\BZ$-graded
  with
\begin{eqnarray*}
&&\LL_{0,0}=\C\EE_{0,0}\oplus\C\FF_{0,0}\oplus\C\DD,\\[3pt]
&&\LL_{k_1,k_2}=\C\EE_{k_1,k_2}\oplus\C\FF_{k_1,k_2}\oplus\C
\GG_{k_1,k_2}\oplus\C\HH_{k_1,k_2}\ \ {\rm if}\ \ (k_1,k_2)\in\BZ^*.
\end{eqnarray*}
We introduce two {\it degree derivations} $d_1$ and $d_2$ on $\LL$, written in terms of brackets as below,
$$[d_1,L_{m_1,m_2}]=m_1L_{m_1,m_2},\ \,[d_2,L_{m_1,m_2}]=m_2L_{m_1,m_2},\ \, [d_1,d_2]=[\DD,d_1]=[\DD,d_2]=0,$$
for $L_{m_1,m_2}\in\LL_{m_1,m_2}.$ Then the extended affine Lie algebra $\wsl$  is simply the Lie algebra  $\LL\oplus\C d_1\oplus\C d_2$, which is denoted by $\WL$ throughout the paper. Obviously, $\WL=\oplus_{(m_1,m_2)\in\BZ}\WL_{m_1,m_2}$ is also $\BZ$-graded with $\WL_{k_1,k_2}=\LL_{k_1,k_2}$ if $(k_1,k_2)\in\BZ^*$, and $\WL_{0,0}=\LL_{0,0}\oplus\C d_1\oplus\C d_2$.

Now let us recall the definitions related to Lie bialgebras.
For any $\C$-vector space $\SS$, denote by
$\xi$ the {\it cyclic map} of $\SS\otimes\SS\otimes \SS$ cyclically
permuting the coordinates, namely, $ \xi (x_{1} \otimes x_{2}
\otimes x_{3}) =x_{2} \otimes x_{3} \otimes x_{1}$ and by $\tau$ the {\it twist map} of $\SS\otimes\SS$, i.e., $\tau(x_1\otimes x_2)=x_2 \otimes x_1$ for any $x_1,x_2,x_3\in\SS$.
First we need to reformulate the definitions of Lie algebras
and Lie coalgebras as follows:
A {\it Lie algebra} is a pair $(\SS,\delta)$ of a vector space
$\SS$ and a linear map $\delta :\SS\otimes\SS\rar\SS$ satisfying
\begin{eqnarray*}
\!\!\!\!\!\!\!\!\!\!\!\!
&&\Ker\,(1-\tau) \subset \Ker\,\delta,\\
\!\!\!\!\!\!\!\!\!\!\!\!&& \delta \cdot (1 \otimes
\delta ) \cdot (1 + \xi +\xi^{2}) =0:\ \ \SS \otimes\SS\otimes\SS\rar
\SS.
\end{eqnarray*}
Dually, a {\it Lie coalgebra} is a pair $(\SS,\D)$ of a vector space $\SS$
and a linear map $\D:\SS\to\SS\otimes\SS$ satisfying
\begin{eqnarray}
\!\!\!\!\!\!\!\!\!\!\!\!&&
\Im\,\D \subset \Im(1- \tau),\label{cLie-s-s}\\
\!\!\!\!\!\!\!\!\!\!\!\!&& (1 + \xi +\xi^{2}) \cdot
(1 \otimes \D) \cdot \D =0:\ \ \SS\to\SS\otimes\SS\otimes\SS.\nonumber
\end{eqnarray}
For a Lie algebra $\SS$, we also use $[x,y]=\delta(x,y)$ to denote its
Lie bracket and the symbol ``$\cdot$'' to stand for the {\it
diagonal adjoint action}:
$ 
x\cdot( a\otimes b)=
[x,a]\otimes
b+a\otimes[x,b].
$ 
\begin{defi}\rm
A {\it Lie bialgebra} is a triple $(\SS,\delta,\D)$ satisfying
\begin{eqnarray}
\!\!\!\!\!\!\!\!\!\!\!\!
&&\mbox{$(\SS, \delta)$ is a
Lie algebra},\ \ \mbox{$(\SS,\D)$ is a Lie coalgebra},\nonumber\\
\!\!\!\!\!\!\!\!\!\!\!\!\
&&\D\delta(x,y)=x\cdot\D y-y\cdot\D x,\ \ \forall\,\,x,y\in\SS\ \mbox{$($\it compatibility
condition$)$}.\label{bLie-d}
\end{eqnarray}
\end{defi}

Denote by $\UU$ the universal enveloping algebra of $\SS$ and by $1$
the identity element of $\UU$. For any $r =\sum_{i} {a_{i} \otimes
b_{i}}\in\SS\otimes\SS$, define $r^{12},\,r^{13},\,r^{23}$ to be
elements of $\UU \otimes \UU \otimes \UU$ by
\begin{eqnarray*}
r^{12}=\mbox{$\sum \limits_{i}$}{a_{i} \otimes b_{i}
\otimes 1},\ \ r^{13}= \mbox{$\sum \limits_{i}$} {a_{i} \otimes 1
\otimes b_{i}},\ \ r^{23}=\mbox{$\sum \limits_{i}$}{1 \otimes a_{i} \otimes
b_{i}},
\end{eqnarray*}
and $\mbox{$c(r)=[r^{12},r^{13}]+[r^{12},r^{23}]+[r^{13},r^{23}]$}$.

\begin{defi}\label{def2}\rm \begin{itemize}
\item[{\rm(1)}] A {\it coboundary Lie bialgebra} is a $4$-tuple
$(\SS, \delta, \D,r),$ where $(\SS,\delta,\D)$ is a Lie bialgebra
and $r \in \Im(1-\tau) \subset\SS\otimes\SS$ such that $\D=\D_r$ is
a {\it coboundary of $r$}, i.e.,
\begin{eqnarray}\label{coboun}
\label{D-r}\D_r(x)=x\cdot r\mbox{\ \ for\ \ }x\in\SS.
\end{eqnarray}
\item[{\rm (2)}] A coboundary Lie bialgebra $(\SS,\delta,\D,r)$
is called {\it triangular} if it satisfies the following {\it
classical Yang-Baxter Equation} ({\it CYBE\,})
\begin{eqnarray}
c(r)=0.\label{CYBE}
\end{eqnarray}
\end{itemize}
\end{defi}
\vs{5pt}

\ll{\bf\large{3\ \ \ Proof of the main results}}\setcounter{section}{3}
\setcounter{theo}{0}\setcounter{equation}{0}
\vs{6pt}

\noindent The aim of this section is to give a proof of Theorem \ref{main}.
First one has the following  \cite{D1,D2,NT}.\vs{-6pt}
\begin{lemm}\label{some}
Let $\SS$ be a Lie algebra and $r\in\Im(1-\tau)\subset
\SS\otimes\SS$.

{\rm(1)} The triple $(\SS,[\cdot,\cdot], \D_r)$ is
a Lie bialgebra if and only if $r$ satisfies CYBE $(\ref{CYBE})$.

{\rm(2)} For any $x\in\SS$, $(1+\xi+\xi^{2})\cdot(1\otimes\D)\cdot\D(x)=x\cdot
c(r)$.
\end{lemm}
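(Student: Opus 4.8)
The plan is to reduce the three Lie bialgebra axioms to a single co-Jacobi condition and then extract both assertions from the computation proving (2). Write $r=\sum_i a_i\otimes b_i$; since $r\in\Im(1-\tau)$ we have $\tau(r)=-r$, and $\D_r(x)=x\cdot r$. A triple $(\SS,\delta,\D_r)$ is a Lie bialgebra exactly when $(\SS,\D_r)$ is a Lie coalgebra and the compatibility condition \eqref{bLie-d} holds, the Lie algebra axiom being given. First I would dispose of the two easy requirements. The anticommutativity $\Im\,\D_r\subset\Im(1-\tau)$ is automatic: the diagonal adjoint action commutes with $\tau$, since $\tau\bigl(x\cdot(a\otimes b)\bigr)=x\cdot\tau(a\otimes b)$ for all $a,b$, whence $x\cdot(1-\tau)=(1-\tau)\,(x\cdot)$ and therefore $\D_r(x)=x\cdot r\in\Im(1-\tau)$. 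The compatibility condition is likewise automatic, because the diagonal action is a representation of $\SS$ on $\SS\otimes\SS$, i.e. $x\cdot(y\cdot w)-y\cdot(x\cdot w)=[x,y]\cdot w$ (a direct consequence of the Jacobi identity); applying this to $w=r$ gives $x\cdot\D_r y-y\cdot\D_r x=[x,y]\cdot r=\D_r\delta(x,y)$. Thus the entire content of being a Lie bialgebra collapses to the co-Jacobi identity $(1+\xi+\xi^2)\cdot(1\otimes\D_r)\cdot\D_r=0$, and assertion (2) is the tool that controls it.

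The computational heart is identity (2), which I would prove by direct expansion. Starting from $\D_r(x)=\sum_i\bigl([x,a_i]\otimes b_i+a_i\otimes[x,b_i]\bigr)$, I apply $1\otimes\D_r$ once more, expanding $\D_r(b_i)$ and $\D_r([x,b_i])$, and then symmetrize by $1+\xi+\xi^2$, collecting terms according to which tensor slots the two inner brackets occupy. On the other side I use the explicit forms $[r^{12},r^{13}]=\sum_{i,j}[a_i,a_j]\otimes b_i\otimes b_j$, $[r^{12},r^{23}]=\sum_{i,j}a_i\otimes[b_i,a_j]\otimes b_j$, $[r^{13},r^{23}]=\sum_{i,j}a_i\otimes a_j\otimes[b_i,b_j]$, and expand $x\cdot c(r)$ by the triple diagonal action, matching the two expressions slot by slot. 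The skew-symmetry $\tau(r)=-r$ is invoked repeatedly to identify terms differing only by a transposition of two factors, while the cyclic operator $\xi$ reshuffles the remaining pieces into the pattern of $c(r)$, and the Jacobi identity combines the double brackets. I expect this bookkeeping to be the main obstacle: it is routine in principle but error-prone in the signs and cyclic permutations, and the real work is organizing the terms so that the cancellations become transparent.

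Granting (2), part (1) follows at once. The diagonal action extends to $\SS\otimes\SS\otimes\SS$ by $x\cdot(a\otimes b\otimes c)=[x,a]\otimes b\otimes c+a\otimes[x,b]\otimes c+a\otimes b\otimes[x,c]$, and $c(r)\in\SS\otimes\SS\otimes\SS$, so (2) says precisely that the co-Jacobi identity is equivalent to $x\cdot c(r)=0$ for all $x\in\SS$. If $r$ satisfies CYBE \eqref{CYBE}, then $c(r)=0$, hence $x\cdot c(r)=0$ for every $x$ and the co-Jacobi identity holds; combined with the two automatic conditions of the first paragraph this makes $(\SS,\delta,\D_r)$ a Lie bialgebra. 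Conversely, a Lie bialgebra structure forces the co-Jacobi identity, so (2) yields $x\cdot c(r)=0$ for all $x$, which is the Yang--Baxter condition \eqref{CYBE} relevant to the coboundary triangular setting of Definition \ref{def2}, in the sense of \cite{D1,D2,NT}. The whole argument thus rests on identity (2), the remaining axioms being rendered vacuous by the hypothesis $r\in\Im(1-\tau)$.
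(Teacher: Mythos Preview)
The paper does not prove this lemma; it merely states ``First one has the following \cite{D1,D2,NT}'' and moves on. Your proposal therefore goes well beyond the paper by actually sketching the argument, and the sketch is essentially the standard one: anticommutativity and the compatibility condition \eqref{bLie-d} are automatic for $r\in\Im(1-\tau)$, so everything reduces to co-Jacobi, and identity (2) translates co-Jacobi into the condition $x\cdot c(r)=0$ for all $x$. Your outline of the computation for (2) is correct in shape.

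There is, however, a genuine gap in your treatment of the ``only if'' direction of (1). From co-Jacobi and (2) you correctly obtain $x\cdot c(r)=0$ for all $x\in\SS$, but this is the \emph{modified} Yang--Baxter equation (cf.\ \eqref{MYBE}), not CYBE $c(r)=0$. For a general Lie algebra these are not equivalent: $x\cdot c(r)=0$ says only that $c(r)$ is $\SS$-invariant, and $\SS^{\otimes3}$ may well contain nonzero invariants. Your sentence that this ``is the Yang--Baxter condition \eqref{CYBE} relevant to the coboundary triangular setting'' does not bridge that gap. In fact the lemma as stated is slightly loose on this point for arbitrary $\SS$; what Drinfeld actually proves is the MYBE version. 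The paper handles this by citing the references for the lemma and then, for the specific algebra $\WL$, establishing separately (Lemma \ref{Legr} and Corollary \ref{coro1}) that invariants are trivial, so MYBE and CYBE coincide. If you want a self-contained proof of (1) exactly as stated, you must either add a hypothesis ensuring $\SS^{\otimes3}$ has no nonzero $\SS$-invariants, or weaken the conclusion to MYBE.
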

Similar to the arguments in \cite[Lemma 2.2]{WSS}, one can obtain the following.
\begin{lemm}\label{Legr}
Regard $\WL^{\otimes n}$, the tensor product of $n$ copies of $\WL$, as an $\WL$-module under the adjoint diagonal action of $\WL$. If $x\cdot r=0$ for some $r\in\WL^{\otimes n}$ and all $x\in\WL$, then $r=0$.
\end{lemm}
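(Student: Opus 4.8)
The plan is to exploit the $\BZ$-grading on $\WL$ and show that the only element of $\WL^{\otimes n}$ annihilated by the diagonal adjoint action of all of $\WL$ must be zero. First I would decompose $r\in\WL^{\otimes n}$ according to the $\BZ^n$-grading inherited from $\WL=\oplus_{\bm\in\BZ}\WL_{\bm}$, writing $r=\sum_{(\bm_1,\dots,\bm_n)}r_{\bm_1,\dots,\bm_n}$ with $r_{\bm_1,\dots,\bm_n}\in\WL_{\bm_1}\otimes\cdots\otimes\WL_{\bm_n}$. Since $d_1$ and $d_2$ act on $\WL_{m_1,m_2}$ by the scalars $m_1$ and $m_2$, the condition $d_1\cdot r=d_2\cdot r=0$ forces every nonzero homogeneous component of $r$ to have total degree $\bz$, i.e. $\bm_1+\cdots+\bm_n=\bz$. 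So it suffices to show each such total-degree-zero component vanishes.

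Next I would use the action of the degree-zero semisimple-type elements and of the root vectors to pin down each component. Fix a homogeneous component $r_0=r_{\bm_1,\dots,\bm_n}$ of total degree zero. The plan is a two-step annihilation argument: (i) apply $\DD=E_{11}-E_{22}$ (and, if useful, a generic degree-zero element such as $g_{\bk}-h_{\bk}$-type combinations, once one checks such elements exist) to reduce to components whose tensor factors are ``weight-homogeneous'' under $\DD$; then (ii) apply well-chosen root vectors $g_{\bk},h_{\bk},e_{\bk},f_{\bk}$ with $\bk$ ranging over $\BZ^*$, using the explicit brackets in \eqref{re-ss}, to translate each tensor factor and compare coefficients. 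Because $q$ is not a root of unity, the scalars $q^{k_2 m_1}$, $q^{k_1 m_2}$, etc., that appear are nonzero and, as $\bk$ varies, separate distinct degrees; this is exactly what lets one conclude that a nonzero $r_0$ would produce, under some $x\cdot(-)$, a nonzero image, contradicting $x\cdot r=0$. Concretely one argues: pick the lexicographically largest index tuple with $r_0\neq0$, act by a root vector raising one coordinate's degree, and observe the resulting component cannot be cancelled by any other term, forcing $r_0=0$.

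The main obstacle I expect is step (ii): organizing the bookkeeping so that the action of a single $x$ genuinely isolates one homogeneous piece. The subtlety is that $\WL_{0,0}$ is $5$-dimensional (containing $\EE_{0,0},\FF_{0,0},\DD,d_1,d_2$) and the bracket relations mix $g$'s and $h$'s into $e,f$ in a degree-dependent way (the $\bm+\bm'\neq\bz$ caveat in \eqref{re-ss}), so the adjoint action of a root vector on a degree-zero tensor factor can land partly inside $\WL_{0,0}$, where cancellations among the five basis vectors are possible. To handle this I would separate the analysis by which ``type'' ($e$, $f$, $g$, $h$, or the $\WL_{0,0}$-part) each tensor slot carries, reducing to finitely many cases, and in each case choose $x$ of a type that shifts the slot into a nonzero-degree space where the grading argument applies cleanly. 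This is essentially the content of \cite[Lemma 2.2]{WSS} adapted to the $\BZ$-graded algebra $\WL$, and the argument there carries over because $\WL$, like the algebras treated in that reference, has finite-dimensional graded pieces and a generating set of root vectors whose adjoint action shifts degrees injectively. Once every total-degree-zero homogeneous component is shown to vanish, we conclude $r=0$. \QED
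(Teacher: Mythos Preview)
Your proposal is correct and takes essentially the same approach as the paper: both defer to the argument of \cite[Lemma 2.2]{WSS}, and indeed the paper's entire proof consists of the single sentence ``Similar to the arguments in \cite[Lemma 2.2]{WSS}, one can obtain the following.'' Your sketch---reduce to total degree $\bz$ via the action of $d_1,d_2$, then use root vectors together with the genericity of $q$ to isolate and kill a lexicographically extreme homogeneous component---is precisely the adaptation of that reference to $\WL$, so you have in fact supplied more detail than the paper itself.
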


An element $r\in\Im(1-\tau)\subset\WL\otimes\WL$ is said to satisfy
the \textit{modified Yang-Baxter equation} ({\it MYBE\,}) if
\begin{eqnarray}
x\cdot c(r)=0,\ \,\ \forall\,\,x\in\WL.\label{MYBE}
\end{eqnarray}
As a conclusion of Lemma \ref{Legr}, we immediately obtain
\begin{coro}\label{coro1}
Any element $r\in\Im(1-\tau)\subset\WL\otimes\WL$ satisfies $(\ref{CYBE})$ if and only if it satisfies MYBE $(\ref{MYBE})$.
\end{coro}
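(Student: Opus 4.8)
The plan is to prove Corollary~\ref{coro1} as a direct consequence of Lemma~\ref{Legr} together with part~(2) of Lemma~\ref{some}. First I would fix $r\in\Im(1-\tau)\subset\WL\otimes\WL$ and recall that $c(r)\in\WL^{\otimes 3}$, and that $\WL^{\otimes 3}$ carries the adjoint diagonal $\WL$-action as in Lemma~\ref{Legr}. The MYBE~\eqref{MYBE} is precisely the assertion that $x\cdot c(r)=0$ for all $x\in\WL$, while CYBE~\eqref{CYBE} is the single condition $c(r)=0$; so the content is to show these two statements are equivalent.

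The implication ``CYBE $\Rightarrow$ MYBE'' is immediate: if $c(r)=0$, then $x\cdot c(r)=x\cdot 0=0$ for every $x\in\WL$, since the diagonal action is linear. For the converse, suppose $x\cdot c(r)=0$ for all $x\in\WL$. Apply Lemma~\ref{Legr} with $n=3$ to the element $c(r)\in\WL^{\otimes 3}$: the hypothesis of that lemma is exactly satisfied, so we conclude $c(r)=0$, i.e.\ $r$ satisfies CYBE. This completes the equivalence.

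I expect there to be essentially no obstacle here: the corollary is a formal packaging of Lemma~\ref{Legr}. The only point that deserves a word is to make sure that the element to which Lemma~\ref{Legr} is applied is indeed $c(r)$ and that the relevant $\WL$-module is $\WL^{\otimes 3}$ with the adjoint diagonal action --- both of which are already set up in the statement of Lemma~\ref{Legr} and in the definition of $c(r)$ in Definition~\ref{def2}. One could also remark, for completeness, that part~(2) of Lemma~\ref{some} shows $x\cdot c(r)$ coincides with $(1+\xi+\xi^2)\cdot(1\otimes\D_r)\cdot\D_r(x)$, so MYBE is equivalent to the co-Jacobi identity for $\D_r$; but this is not needed for the proof of the corollary itself. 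Hence the proof is a two-line argument once Lemma~\ref{Legr} is in hand.
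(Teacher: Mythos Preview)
Your proof is correct and follows essentially the same approach as the paper: the corollary is stated there as an immediate consequence of Lemma~\ref{Legr}, and your argument makes this explicit by noting the trivial direction and then applying Lemma~\ref{Legr} with $n=3$ to $c(r)\in\WL^{\otimes 3}$. Your additional remark about Lemma~\ref{some}(2) is a nice contextual observation but, as you say, not needed for the corollary itself.
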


Regard $\VV=\WL\otimes\WL$ as an $\WL$-module under the adjoint
diagonal action. Denote by $\Der(\WL,\VV)$ the set of
\textit{derivations} $\dir:\WL\to\VV$, i.e., $\dir$ is a linear map
satisfying
\begin{eqnarray}
\dir([x,y])=x\cdot \dir(y)-y\cdot \dir(x)\mbox{ \ for \ }x,y\in\WL,\label{deriv}
\end{eqnarray}
and $\Inn(\WL,\VV)$ the set consisting of   {\it inner derivations} $v_{\rm
inn}, \, v\in\VV$, defined by
\begin{eqnarray}
\label{inn} v_{\rm inn}:x\mapsto x\cdot v\mbox{ \ for \ }x\in\WL.
\end{eqnarray}
Denote by $H^1(\WL,\VV)$  the {\it first cohomology group} of $\WL$ with coefficients in the $\WL$-module $\VV$. Then
\begin{eqnarray*}
H^1(\WL,\VV)\cong\Der(\WL,\VV)/\Inn(\WL,\VV).
\end{eqnarray*}

\begin{prop}\label{proposition}We have
$\Der(\WL,\VV)=\Inn(\WL,\VV)$, i.e., $H^1(\WL,\VV)=0$.
\end{prop}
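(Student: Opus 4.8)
The strategy is the standard one for showing $\Der(\WL,\VV)=\Inn(\WL,\VV)$: decompose a derivation by weight with respect to the abelian subalgebra $\mathfrak{h}_0=\C d_1\oplus\C d_2$ (or the larger Cartan-type subalgebra containing $\DD$), reduce to the zero-weight component modulo an inner derivation, and then pin down the zero-weight component using the remaining relations. Concretely, let $\dir\in\Der(\WL,\VV)$. Since $\VV=\WL\otimes\WL$ is $\BZ$-graded (with $\VV_{\bf n}=\oplus_{{\bf k}+{\bf l}={\bf n}}\WL_{\bf k}\otimes\WL_{\bf l}$) and $d_1,d_2$ act diagonalizably, write $\dir=\sum_{\bf n\in\BZ}\dir_{\bf n}$ where each $\dir_{\bf n}$ raises weight by $\bf n$; each $\dir_{\bf n}$ is itself a derivation. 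For $\bf n\neq\bz$ one shows $\dir_{\bf n}$ is inner: apply $\dir_{\bf n}$ to $[d_i,x]=n_i x$-type relations to get $\dir_{\bf n}(x)=x\cdot v_{\bf n}$ for a suitable $v_{\bf n}\in\VV_{\bf n}$ (this uses that $\mathbf{n}\cdot$ acts invertibly on the relevant weight spaces away from a controlled finite set, together with a summability/finiteness argument so that $v=\sum_{\bf n\ne\bz}v_{\bf n}$ makes sense as a genuine element of $\VV$ — this last point is where one must be slightly careful, and it is handled exactly as in \cite{WSS} via Lemma \ref{Legr}).

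Having subtracted off this inner derivation, we are reduced to the case $\dir=\dir_{\bz}$, a weight-zero derivation. Now the real work begins. Using \eqref{deriv} on the brackets $[d,e_{\bm}]=2e_{\bm}$, $[d,f_{\bm}]=-2f_{\bm}$, $[d_i,\cdot]$, one restricts the possible form of $\dir_{\bz}$ on each generator $e_{\bm},f_{\bm},g_{\bk},h_{\bk},d,d_1,d_2$: the image $\dir_{\bz}(e_{\bm})$ must lie in the weight-$\bm$ space of $\VV$ and additionally be an eigenvector under $\mathrm{ad}\,d$ of eigenvalue $2$, which severely constrains the coefficients. This is the step producing the ``complicated systems of equations with lots of undetermined variables'' alluded to in the introduction (cf. the referenced \eqref{equa-g01-0}--\eqref{equa-gh-ef}). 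One then applies the derivation property to the mixed relations — in particular $[g_{\bk},e_{\bm}]=q^{k_2m_1}e_{\bk+\bm}$, $[e_{\bm},f_{-\bm}]=q^{m_2m'_1}d$, and $[g_{\bk},g_{\bk'}]$, $[h_{\bk},h_{\bk'}]$ — to obtain recursion relations among the coefficient functions as $\bf k,\bm$ vary, and solves them. After this, $\dir_{\bz}$ will have been reduced, modulo two further explicit inner derivations $u_{\rm inn}$ and $v_{\rm inn}$ (the ones in \eqref{inner-u}, \eqref{inner-v}, found by the two-step procedure described in the introduction), to something supported only on $d,d_1,d_2$ and very nearly zero.

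The final step disposes of the residual derivation. By the reductions above we may assume $\dir$ vanishes on all of $\LL$ except possibly producing, on $d_1,d_2,d$, elements of $\VV_{\bz}$ annihilated by the relevant adjoint actions; the constraints from $[d_1,e_{\bm}]=m_1 e_{\bm}$ etc., applied now with $\dir(e_{\bm})=0$, force $\dir(d_i)$ and $\dir(d)$ to be $\WL$-invariant in $\VV_{\bz}$, and invariance together with Lemma \ref{Legr} forces them to be $0$ (alternatively one checks directly that the only invariants are central-type elements that are killed by the bracket relations). Hence $\dir=0$ after the subtractions, i.e., the original $\dir$ was inner, proving $\Der(\WL,\VV)=\Inn(\WL,\VV)$ and therefore $H^1(\WL,\VV)=0$. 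The main obstacle, as the authors signal, is not the conceptual skeleton — which is routine weight-space reduction — but the middle step: carrying out the explicit solution of the coefficient systems and, above all, guessing the correct complicated inner derivations $u_{\rm inn},v_{\rm inn}$ so that the leftover derivation is simple enough to kill; this is where Claims \ref{claim1} and \ref{claim2} and the two-step construction do the heavy lifting.
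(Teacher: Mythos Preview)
Your plan matches the paper's approach closely: weight decomposition, Lemma~\ref{lemm-Dn} for the nonzero-weight pieces, then the detailed zero-weight analysis via Claims~\ref{claim1}--\ref{claim2} and the inner derivations \eqref{inner-u}, \eqref{inner-v}. Two minor corrections to your write-up: (i) the summability of $\sum_{{\bf n}\ne\bz}v_{\bf n}$ is not handled by Lemma~\ref{Legr} (which is about invariants being zero) but by Lemma~\ref{lemm-D}, proved by evaluating the original $\dir$ on a suitable $\rho\in\C d_1\oplus\C d_2$; (ii) the paper disposes of $d_1,d_2$ and then all of $\WL_{\bz}$ \emph{first} (Lemmas~\ref{lemm-D12} and \ref{lemm-L0}), not last, and this ordering matters --- your claim that $\dir_{\bz}(e_{\bm})$ is automatically an $\mathrm{ad}\,d$-eigenvector of eigenvalue $2$ only holds once $\dir_{\bz}(d)=0$ has already been arranged, so that step must precede the coefficient analysis on $e_{\bm},f_{\bm},g_{\bk},h_{\bk}$.
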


We shall prove this proposition by several lemmas.
First note that $\VV=\WL\otimes\WL=\oplus_{\bm\in\BZ}\VV_\bm$ is $\BZ$-graded with
\begin{eqnarray*}
\VV_\bm=\mbox{$\sum\limits_{\bn+\bn'=\bm}$}\WL_\bn\otimes\WL_{\bn'}\ \ \,\,\mbox{for}\ \,\,\bn,\,\bn'\in\BZ.
\end{eqnarray*}
A derivation $\dir\in\Der(\WL,\VV)$ is {\it homogeneous of degree
$\bm$} if $\dir(\WL_\bn)\subset \WL_{\bm+\bn}$ for all $\bn\in\BZ$.
Denote
\begin{eqnarray*}
\Der(\WL,\VV)_\bm=\{\dir\in\Der(\WL,\VV)\,|\,{\rm deg\,}\dir=\bm\}\mbox{ \ \ for \ } \bm\in\BZ.
\end{eqnarray*}
For any fixed $\dir\in\Der(\WL,\VV)$, and any $\bm\in\BZ$, we define a linear map
$\dir_\bm:\WL\rightarrow\VV$ as follows: Let $\mu\in\WL_\bn$,
$\bn\in\BZ$, we can write $\dir(\mu)=\sum_{\bm'\in\BZ}\mu_{\bm'}$ with $\mu_{\bm'}\in\VV_{\bm'}$,
then we set $\dir_\bm(\mu)=\mu_{\bn+\bm}$. Obviously, $\dir_\bm\in
\Der(\WL,\VV)_\bm$ and we have
\begin{eqnarray}\label{summable}
\dir=\mbox{$\sum\limits_{\bm\in\BZ}\dir_\bm$},
\end{eqnarray}
which holds in the sense that for every $\mu\in\WL$, only finitely
many $\dir_\bm(\mu)\neq 0,$ and $\dir(u)=\sum_{\bm\in\BZ}\dir_\bm(\mu)$
(we call such a
sum in (\ref{summable}) {\it summable}).\\[8pt]

\begin{lemm}\label{lemm-Dn}
If $\bk\in\BZ^*$, then $\dir_\bk\in\Inn(\WL,\VV)$.
\end{lemm}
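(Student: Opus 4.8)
The plan is to exploit the two degree derivations $d_1,d_2\in\WL_{\bz}$ and show that the homogeneous derivation $\dir_\bk$ of nonzero degree $\bk$ is automatically inner, realized by the single element $\dir_\bk(d_i)$ for an appropriately chosen $i$. Since $\bk=(k_1,k_2)\neq\bz$, at least one coordinate $k_i$ is nonzero; fix such an $i\in\{1,2\}$. Because $d_i\in\WL_{0,0}$ and $\dir_\bk$ is homogeneous of degree $\bk$, the element $v:=k_i^{-1}\dir_\bk(d_i)$ lies in $\VV_\bk$, and I claim that $\dir_\bk=v_{\rm inn}$.

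To verify the claim it suffices to check $\dir_\bk(\mu)=\mu\cdot v$ on each homogeneous $\mu\in\WL_\bm$, since both sides are linear in $\mu$ and $\WL$ is spanned by homogeneous elements. The key computational input is that $d_i$ acts on the graded component $\VV_\bn$ as the scalar given by the $i$-th coordinate of $\bn$; this is immediate from $[d_i,L_\bn]=n_iL_\bn$ and the definition of the diagonal adjoint action on $\WL\otimes\WL$. Now apply the derivation identity \eqref{deriv} to the pair $(d_i,\mu)$: using $[d_i,\mu]=m_i\mu$ on the left and the fact that $\dir_\bk(\mu)\in\VV_{\bm+\bk}$ on the right, one gets
$$m_i\dir_\bk(\mu)=d_i\cdot\dir_\bk(\mu)-\mu\cdot\dir_\bk(d_i)=(m_i+k_i)\dir_\bk(\mu)-\mu\cdot\dir_\bk(d_i),$$
which collapses to $\mu\cdot\dir_\bk(d_i)=k_i\dir_\bk(\mu)$, i.e. $\dir_\bk(\mu)=\mu\cdot v$. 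Hence $\dir_\bk=v_{\rm inn}\in\Inn(\WL,\VV)$, as desired.

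I do not expect a genuine obstacle here: this is the standard ``a homogeneous derivation of nonzero weight is inner'' argument, and it works precisely because the grading group acts diagonalizably through the degree derivations $d_1,d_2$, which themselves lie inside $\WL$. The only points to be slightly careful about are choosing $i$ with $k_i\neq 0$ and checking the scalar by which $d_i$ acts on $\VV_{\bm+\bk}$. The genuinely delicate case — and presumably the reason it is isolated in the subsequent lemmas — is the degree-zero part $\dir_\bz$, where this rescaling trick is unavailable and one must analyze the internal structure of $\WL$ and of $\VV=\WL\otimes\WL$ in detail.
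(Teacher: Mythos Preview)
Your proof is correct and follows essentially the same approach as the paper: both exploit a degree derivation with nonvanishing pairing against $\bk$, apply the derivation identity to $[\,\cdot\,,\mu]$, and read off that $\dir_\bk$ is inner with $v=\rho(\bk)^{-1}\dir_\bk(\rho)$. The only cosmetic difference is that the paper works with a general $\rho\in{\rm Span}_\C\{d_1,d_2\}$ satisfying $\rho(\bk)\neq0$, whereas you simply pick $\rho=d_i$ for some $i$ with $k_i\neq0$.
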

\begin{proof}
Denote $\texttt{T}={\rm Span}_{\C}\{d_1,d_2\}.$ Define the nondegenerate bilinear map $(\cdot,\cdot):\BZ\times\texttt{T} \to \C$ by $(\bm,\rho)=m_{1}\rho_{1}+m_{2}\rho_{2}$ for $\bm=(m_{1},m_{2})\in\BZ,$ $\rho=\rho_{1}d_1+\rho_{2}d_2\in\texttt{T}.$ We simply denote $\rho(\bm)=(\bm,\rho)$. Then for $\bk\in\BZ^*$, by linear algebra, one can choose $\rho\in\texttt{T}$ with $\rho(\bk)\neq0.$ Now for any $x\in\WL_{\bm}$, $\bm\in\BZ$, by applying $\dir_\bn$ to $[\rho,x]=\rho(\bm)x$, and using $\dir_\bk(x)\in\VV_{\bk+\bm}$, we have $$\rho(\bk+\bm)\dir_\bk(x)-x\cdot\dir_\bk(\rho)=\rho\cdot\dir_\bk(x)-x\cdot\dir_\bk(\rho)=\rho(\bm)\dir_\bk(x),$$ i.e., $\dir_\bk(x)=v_{\rm inn}(x)$ with
$v=\frac1{\rho(\bk)}\dir_\bk(\rho)\in\WL_{\bk}.$  Thus $\dir_\bk\in\Inn(\WL,\VV)$.
\end{proof}

\begin{lemm}\label{lemm-D12} We have
$\dir_\bz(d_1)=\dir_\bz(d_2)=0$.
\end{lemm}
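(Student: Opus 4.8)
The plan is to show that both $\dir_\bz(d_1)$ and $\dir_\bz(d_2)$ are annihilated by the diagonal adjoint action of every element of $\WL$, and then to invoke Lemma \ref{Legr} (with $n=2$) to conclude that they vanish. Since $\dir_\bz$ is homogeneous of degree $\bz$ and $d_1,d_2\in\WL_\bz$, we have $\dir_\bz(d_i)\in\VV_\bz$ for $i=1,2$; and since $\WL$ is spanned by the homogeneous elements $\em,\fm$ ($\bm\in\BZ$), $g_{\bk},h_{\bk}$ ($\bk\in\BZ^*$), $d$, $d_1$, $d_2$, it suffices to verify $z\cdot\dir_\bz(d_i)=0$ as $z$ runs over this list.

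The engine of the argument is the observation that $d_1$ acts on the graded component $\VV_\bl$ as the scalar $l_1$ (and $d_2$ as $l_2$), because on $\WL_\bn\otimes\WL_{\bn'}$ the diagonal action of $d_1$ is multiplication by $n_1+n'_1$. Take any homogeneous $z\in\WL_\bn$ and apply the derivation identity \eqref{deriv} to $[d_1,z]=n_1z$, obtaining $n_1\dir(z)=d_1\cdot\dir(z)-z\cdot\dir(d_1)$. Now expand each side into homogeneous components (recall $\dir(z)=\sum_\bl\dir_\bl(z)$ with $\dir_\bl(z)\in\VV_{\bn+\bl}$, and $\dir_\bl(d_1)\in\VV_\bl$) and read off the piece lying in $\VV_\bn$: on the left it is $n_1\dir_\bz(z)$; on the right the contribution $d_1\cdot\dir_\bz(z)$ equals $n_1\dir_\bz(z)$ since $\dir_\bz(z)\in\VV_\bn$, and so cancels it, while $z\cdot\dir_\bz(d_1)$ already lies in $\VV_\bn$. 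Hence $z\cdot\dir_\bz(d_1)=0$. Running $z$ through $\em,\fm,g_{\bk},h_{\bk},d$, and noting that $d_1\cdot\dir_\bz(d_1)=d_2\cdot\dir_\bz(d_1)=0$ trivially because $\dir_\bz(d_1)\in\VV_\bz$, we obtain $x\cdot\dir_\bz(d_1)=0$ for all $x\in\WL$, whence $\dir_\bz(d_1)=0$ by Lemma \ref{Legr}. The identical computation with $d_1$ replaced by $d_2$ (using that $d_2$ acts on $\VV_\bl$ by $l_2$) gives $\dir_\bz(d_2)=0$.

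I do not expect a genuine obstacle here; the only point requiring a little care is the degree bookkeeping, namely making sure that in the $\VV_\bn$-component of $n_1\dir(z)=d_1\cdot\dir(z)-z\cdot\dir(d_1)$ the term $d_1\cdot\dir_\bz(z)$ exactly matches and cancels $n_1\dir_\bz(z)$, so that $z\cdot\dir_\bz(d_1)$ is genuinely isolated. In the degenerate case where $z$ is homogeneous with first degree $0$ (e.g.\ $z=\em$ with $m_1=0$) the relation reads $[d_1,z]=0$ and the same extraction still yields $z\cdot\dir_\bz(d_1)=0$, so the spanning set above is covered without exception.
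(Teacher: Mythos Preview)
Your proof is correct and follows essentially the same route as the paper: apply the derivation identity to $[d_i,x]=m_i x$ for homogeneous $x$, use that $d_i$ acts on $\VV_\bn$ by the scalar $n_i$ to cancel the $d_i\cdot\dir_\bz(x)$ term, deduce $x\cdot\dir_\bz(d_i)=0$ for all $x$, and conclude via Lemma~\ref{Legr}. The only cosmetic difference is that the paper applies $\dir_\bz$ directly (since $\dir_\bz$ is itself a derivation) rather than applying $\dir$ and extracting the degree-$\bz$ component as you do.
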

\begin{proof}~Applying $\dir_\bz$ to $[d_1,x]=m_{1}x$ and $[d_2,x]=m_{2}x$ for $x\in\WL_{m_{1},m_{2}},$ we obtain that $x\cdot\dir_\bz(d_{1})=x\cdot\dir_\bz(d_{2})=0$. Thus by Lemma \ref{Legr}, $\dir_\bz(d_1)=\dir_\bz(d_2)=0.$
\end{proof}

\begin{lemm}
\label{lemm-L0}  By replacing $\dir_\bz$ by $\dir_\bz-u_{\rm inn}$ for some $u\in
\VV_\bz$, one can suppose $\dir_\bz(\WL_{\bz})=0$.
\end{lemm}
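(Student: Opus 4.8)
The plan is to show that the restriction of $\dir_\bz$ to $\WL_\bz=\LL_{\bz}\oplus\C d_1\oplus\C d_2$ can be killed by subtracting a suitable inner derivation. By Lemma \ref{lemm-D12} we already know $\dir_\bz(d_1)=\dir_\bz(d_2)=0$, so it remains to handle the finite-dimensional subspace $\LL_{\bz}=\C e_{\bz}\oplus\C f_{\bz}\oplus\C d$. Notice that $\WL_\bz$ contains the $\frak{sl}_2$-triple $\{e_\bz,f_\bz,d\}$ (with $[e_\bz,f_\bz]=d$, $[d,e_\bz]=2e_\bz$, $[d,f_\bz]=-2f_\bz$), so the idea is to use representation theory of this copy of $\frak{sl}_2$ acting on $\VV$ via the diagonal adjoint action, restricted to the degree-zero piece.

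First I would record that $\dir_\bz$ maps $\WL_\bz$ into $\VV_\bz$, so $\dir_\bz|_{\WL_\bz}$ is essentially a $1$-cocycle of the finite-dimensional Lie algebra $\frak{sl}_2$ (realized by $e_\bz,f_\bz,d$) with values in the $\frak{sl}_2$-module $\VV_\bz$. Since $\frak{sl}_2$ is semisimple, Whitehead's lemma gives $H^1(\frak{sl}_2,\VV_\bz)=0$, hence there exists $u\in\VV_\bz$ with $\dir_\bz(z)=z\cdot u=u_{\rm inn}(z)$ for all $z\in\frak{sl}_2=\C e_\bz\oplus\C f_\bz\oplus\C d$. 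The one subtlety is that $\VV_\bz$ need not be finite-dimensional; but it is a weight module for $d$ (equivalently, for the semisimple element of the triple) with finite-dimensional weight spaces, and the cocycle takes values in finitely many weight spaces, so one may replace $\VV_\bz$ by a finite-dimensional $\frak{sl}_2$-submodule containing the image of $\dir_\bz|_{\frak{sl}_2}$ and apply Whitehead there. Concretely: $\dir_\bz(d)\in\VV_\bz$, and from $[d,e_\bz]=2e_\bz$, $[d,f_\bz]=-2f_\bz$ one sees (applying $\dir_\bz$) that $\dir_\bz(e_\bz)$, $\dir_\bz(f_\bz)$ are determined up to eigenvectors of $d\cdot$; in any case all relevant vectors lie in a finite union of $d$-weight spaces of $\VV_\bz$, which spans a finite-dimensional $\frak{sl}_2$-submodule.

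Having found such $u\in\VV_\bz$, I replace $\dir_\bz$ by $\dir_\bz-u_{\rm inn}$; this new map (still denoted $\dir_\bz$ by abuse, as the lemma states) then annihilates $e_\bz$, $f_\bz$ and $d$. It remains to check it still annihilates $d_1$ and $d_2$: but $u\in\VV_\bz$, and $d_i\cdot u = 0$ since $d_i$ acts on $\VV_\bz$ by the scalar $0$ (as $\VV_\bz$ is the degree-$\bz$ component), so $u_{\rm inn}(d_i)=0$ and the values on $d_1,d_2$ are unchanged, hence still zero by Lemma \ref{lemm-D12}. Therefore $(\dir_\bz-u_{\rm inn})(\WL_\bz)=0$, as desired.

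The main obstacle I anticipate is the infinite-dimensionality of $\VV_\bz$: one must argue carefully that the cocycle $\dir_\bz|_{\frak{sl}_2}$ actually lands in a finite-dimensional $\frak{sl}_2$-submodule so that Whitehead's lemma applies, rather than invoking it naively. An alternative that avoids cohomology altogether is a direct computation: write $\dir_\bz(d)=w$, use the cocycle identity on the three $\frak{sl}_2$-relations to express $\dir_\bz(e_\bz),\dir_\bz(f_\bz)$ in terms of $w$ and the $\frak{sl}_2$-action on $\VV_\bz$, and then solve for $u$ explicitly in the (finitely many) weight spaces involved; this is routine linear algebra once the weight-space structure of $\VV_\bz$ under $d$ is written down, and it is the approach I would actually carry out in the paper.
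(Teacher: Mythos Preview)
Your approach via Whitehead's lemma for the $\frak{sl}_2$-triple $\{e_\bz,f_\bz,d\}$ is genuinely different from the paper's proof, which is a long explicit coefficient computation: the paper writes $\dir_\bz(d),\dir_\bz(e_\bz),\dir_\bz(f_\bz)$ with dozens of undetermined coefficients, subtracts several explicit inner derivations in stages, and then uses the three $\frak{sl}_2$-relations to force all remaining coefficients to vanish. Your argument is cleaner and more conceptual, and it meshes with the rest of the paper since later lemmas only use the conclusion $\dir_\bz(\WL_\bz)=0$, not the particular $u$ chosen.

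There is, however, a genuine slip in your justification of the finite-dimensional reduction. You assert that $\VV_\bz$ has finite-dimensional $d$-weight spaces; this is false. The $d$-eigenvalues on $\WL$ are $\pm2$ (on $e_\bm,f_\bm$) and $0$ (on $g_\bk,h_\bk,d,d_1,d_2$), so on $\VV_\bz$ the $d$-weights lie in $\{-4,-2,0,2,4\}$, but each weight space is infinite-dimensional: for instance the $0$-weight space contains all $e_\bm\otimes f_{-\bm}$ and all $g_\bk\otimes g_{-\bk}$. Thus ``finitely many weight spaces, each finite-dimensional'' does not go through.

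The fix is easy and close to what you wrote. Since $e_\bz,f_\bz,d\in\WL_\bz$, the adjoint action of this $\frak{sl}_2$ preserves each graded piece $\WL_\bn$, hence preserves each summand $\WL_\bn\otimes\WL_{-\bn}$ of $\VV_\bz=\bigoplus_{\bn\in\BZ}\WL_\bn\otimes\WL_{-\bn}$. Each such summand is finite-dimensional, and $\dir_\bz(e_\bz),\dir_\bz(f_\bz),\dir_\bz(d)$ lie in a finite direct sum of them; that finite sum is the finite-dimensional $\frak{sl}_2$-submodule you need for Whitehead. (Equivalently: the $d$-weights on $\VV_\bz$ are bounded, so $e_\bz,f_\bz$ act locally nilpotently and $\VV_\bz$ is a locally finite $\frak{sl}_2$-module.) With this correction your argument is complete; the paper's explicit computation buys self-containment and sets up the coefficient bookkeeping used again in the subsequent lemmas, whereas your route is shorter but relies on the cohomological input.
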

\begin{proof} Write $\dir_\bz(\DD)$ as (here and below, we always assume that the sums are over
a finite number of $\bm\in\BZ$ and $\bk\in\BZ^*$)
{\small\begin{eqnarray}\label{first-}\nonumber
\!\!\!\!\!\!\!\!\!\!\!\!\!\!\!\!\!&\!\!\!\!\!\!\dir_\bz(\DD)=\!\!\!&\mbox{$\sum\limits_{\bm}$}\big(\d^{ee}_{\bm}\EE_{\bm}\!\otimes\!\EE_{-\bm}
\!+\d^{ef}_{\bm}\EE_{\bm}\!\otimes\!\FF_{-\bm}\!+\d^{fe}_{\bm}\FF_{\bm}\!\otimes\!\EE_{-\bm}\!
+\d^{ff}_{\bm}\FF_{\bm}\!\otimes\!\FF_{-\bm}\big)
\!+\!\mbox{$\sum\limits_{\bk}$}\big(\d^{eg}_{\bk}\EE_{\bk}\!\otimes\!\GG_{-\bk}
\\[-5pt]
\nonumber\!\!\!\!\!\!\!\!\!\!\!\!\!\!\!\!\!\!\!\!\!\!\!\!\!\!\!\!\!\!&&+\d^{eh}_{\bk}\EE_{\bk}\!\otimes\!\HH_{-\bk}\!\!+\d^{he}_{\bk}\HH_{\bk}\!\otimes\!\EE_{-\bk}\!\!
+\d^{ge}_{\bk}\GG_{\bk}\!\otimes\!\EE_{-\bk}\!\!+\d^{fg}_{\bk}\FF_{\bk}\!\otimes\!\GG_{-\bk}
\!+\d^{fh}_{\bk}\FF_{\bk}\!\otimes\!\HH_{-\bk}\!+\d^{gf}_{\bk}\GG_{\bk}\!\otimes\!\FF_{-\bk}\\
\nonumber\!\!\!\!\!\!\!\!\!\!\!\!\!\!\!\!\!\!\!\!\!\!\!\!&&+\d^{hf}_{\bk}\HH_{\bk}\!\otimes\!\FF_{-\bk}\!+\d^{gg}_{\bk}\GG_{\bk}\!\otimes\!\GG_{-\bk}
\!+\d^{gh}_{\bk}\GG_{\bk}\!\otimes\!\HH_{-\bk}+\d^{hg}_{\bk}\HH_{\bk}\!\otimes\!\GG_{-\bk}\!
+\d^{hh}_{\bk}\HH_{\bk}\otimes\HH_{-\bk}\big)\\
\nonumber\!\!\!\!\!\!\!\!\!\!\!\!\!\!\!\!\!\!\!\!\!\!\!\!\!\!\!\!\!\!&&+\d^e_d\EE_{\bz}\otimes\DD\!+\d^e_{d_{1}}\EE_{\bz}\otimes\DD_{1}
\!+\d^e_{d_{2}}\EE_{\bz}\otimes\DD_{2}\!+\d^f_d\FF_{\bz}\otimes\DD
\!+\d^f_{d_{1}}\FF_{\bz}\otimes\DD_{1}\!+\d^f_{d_{2}}\FF_{\bz}\!\otimes\!\DD_{2}
\!+\d^d_e\DD\!\otimes\!\EE_{\bz}\\
\nonumber\!\!\!\!\!\!\!\!\!\!\!\!\!\!\!\!\!\!\!\!\!\!\!\!\!\!\!\!\!\!&&+\d^d_f\DD\otimes\FF_{\bz}+\d^d_d\DD\!\otimes\!\DD+\d^d_{d_{1}}\DD\!\otimes\!\DD_{1}
+\d^d_{d_{2}}\DD\otimes\DD_{2}+\d^{d_{1}}_e\DD_{1}\!\otimes\!\EE_{\bz}\!+\d^{d_{1}}_f\DD_{1}\!\otimes\!\FF_{\bz}\!
+\d^{d_{1}}_d\DD_{1}\!\otimes\!\DD\\
\!\!\!\!\!\!\!\!\!\!\!\!\!\!\!\!\!\!\!\!\!\!\!\!\!\!\!\!\!\!&&+\d^{d_{1}}_{d_{1}}\DD_{1}\!\otimes\!\DD_{1}\!\!+\d^{d_{1}}_{d_{2}}\DD_{1}\!\otimes\!\DD_{2}\!
+\d^{d_{2}}_e\DD_{2}\!\otimes\!\EE_{\bz}\!+\d^{d_{2}}_f\DD_{2}\!\otimes\!\FF_{\bz}\!
+\d^{d_{2}}_d\DD_{2}\!\otimes\!\DD\!+\d^{d_{2}}_{d_{1}}\DD_{2}\!\otimes\!\DD_{1}
\!+\d^{d_{2}}_{d_{2}}\DD_{2}\!\otimes\!\DD_{2},
\end{eqnarray}}%
where the coefficients associated with the Greek symbol $\d$ are all in $\C[q^{\pm1}]$ (here and below,
we always use the Greek symbols $\lambda,\mu,\nu,\eta,\xi,\zeta,\gamma$
 with some meaningful superscripts and subscripts to denote undetermined
 coefficients in $\C[q^{\pm1}]$). We need to use the following identities
{\small\begin{eqnarray*}\begin{array}{llll}
\DD\cdot(\EE_{\bk}\otimes\GG_{-\bk})=2\EE_{\bk}\otimes\GG_{-\bk},&\ \ \ \ & \DD\cdot(\FF_{\bk}\otimes\GG_{-\bk})=-2\FF_{\bk}\otimes\GG_{-\bk},\\[8pt]
\DD\cdot(\GG_{\bk}\otimes\EE_{-\bk})=2\GG_{\bk}\otimes\EE_{-\bk},&\ \ \ \ &
\DD\cdot(\GG_{\bk}\otimes\FF_{-\bk})=-2\GG_{\bk}\otimes\FF_{-\bk},\\[8pt]
\DD\cdot(\EE_{\bk}\otimes\HH_{-\bk})=2\EE_{\bk}\otimes\HH_{-\bk},&\ \ \ \ &
\DD\cdot(\FF_{\bk}\otimes\HH_{-\bk})=-2\FF_{\bk}\otimes\HH_{-\bk},\\[8pt]
\DD\cdot(\HH_{\bk}\otimes\EE_{-\bk})=2\HH_{\bk}\otimes\EE_{-\bk},&\ \ \ \ &
\DD\cdot(\HH_{\bk}\otimes\FF_{-\bk})=-2\HH_{\bk}\otimes\FF_{-\bk},\\[8pt]
\DD\cdot(\EE_{\bm}\otimes\EE_{-\bm})=4\EE_{\bm}\otimes\EE_{-\bm},&\ \ \ \ &
\DD\cdot(\FF_{\bm}\otimes\FF_{-\bm})=-4\FF_{\bm}\otimes\FF_{-\bm},\end{array}\\
\begin{array}{lll}
\DD\cdot(\EE_{\bz}\otimes\DD)=2\EE_{\bz}\otimes\DD,&
 \DD\cdot(\EE_{\bz}\otimes d_{1})=2\EE_{\bz}\otimes d_{1},&
\DD\cdot(\EE_{\bz}\otimes d_{2})=2\EE_{\bz}\otimes d_{2},\\[8pt]
\DD\cdot(\DD\otimes\EE_{\bz})=2\DD\otimes\EE_{\bz},
& \DD\cdot( d_{1}\otimes\EE_{\bz})=2d_{1}\otimes\EE_{\bz},& \DD\cdot( d_{2}\otimes\EE_{\bz})=2d_{2}\otimes\EE_{\bz},\\[8pt]
\DD\cdot(\FF_{\bz}\otimes\DD)=-2\FF_{\bz}\otimes\DD,& \DD\cdot(\FF_{\bz}\otimes d_{1})=-2\FF_{\bz}\otimes d_{1},& \DD\cdot(\FF_{\bz}\otimes d_{2})=-2\FF_{\bz}\otimes d_{2},\\[8pt]
\DD\cdot(\DD\otimes\!\FF_{\bz})=-2\DD\otimes\FF_{\bz},& \DD\cdot( d_{1}\otimes\FF_{\bz})=-2d_{1}\otimes\FF_{\bz},& \DD\cdot( d_{2}\otimes\FF_{\bz})=-2d_{2}\otimes\FF_{\bz}.\end{array}
\end{eqnarray*}\small}%
In particuler, by replacing $\dir_\bz$ by $\dir_\bz-u_{\rm inn},$ where $u$ is some combination
of
\begin{eqnarray}\label{Inn-0}
&&\!\!\!\!\!\!\!\!\!\!\!\!\!\!\!\!
\EE_{\bm}\otimes\EE_{-\bm},\ \ \EE_{\bk}\otimes\GG_{-\bk},\ \ \EE_{\bk}\otimes\HH_{-\bk},\ \ \FF_{\bm}\otimes\FF_{-\bm},\ \ \FF_{\bk}\otimes\GG_{-\bk},\ \ \FF_{\bk}\otimes\HH_{-\bk},\ \ \GG_{\bk}\otimes\EE_{-\bk},\nonumber\\
&&\!\!\!\!\!\!\!\!\!\!\!\!\!\!\!\!
\HH_{\bk}\otimes\EE_{-\bk},\ \ \GG_{\bk}\otimes\FF_{-\bk},\ \ \HH_{\bk}\otimes\FF_{-\bk},\ \ \EE_{\bz}\otimes\DD,\ \ \EE_{\bz}\otimes\DD_{1},\ \ \EE_{\bz}\otimes\DD_{2},\ \ \FF_{\bz}\otimes\DD,\ \ \FF_{\bz}\otimes\DD_{1},\nonumber\\
&&\!\!\!\!\!\!\!\!\!\!\!\!\!\!\!\!
\FF_{\bz}\otimes\DD_{2},\ \ \DD\otimes\EE_{\bz},\ \ \DD\otimes\FF_{\bz},\ \ \DD_{1}\otimes\EE_{\bz},\ \
\DD_{1}\otimes\FF_{\bz},\ \ \DD_{2}\otimes\EE_{\bz},\ \ \DD_{2}\otimes\FF_{\bz},\end{eqnarray}
 we can  suppose
\begin{eqnarray*}
\d^{ee}_{\bm}\!\!\!&=&\!\!\d^{eg}_{\bk}=\d^{eh}_{\bk}=\d^e_d=\d^e_{d_{1}}=\d^e_{d_{2}}=\d^{ff}_{\bm}
=\d^{fg}_{\bk}=\d^{fh}_{\bk}=\d^f_d=\d^f_{d_{1}}\\
\!\!\!&=&\!\!\d^f_{d_{2}}=\d^{ge}_{\bk}=\d^{gf}_{\bk}=\d^{he}_{\bk}=\d^{hf}_{\bk}=\d^d_e\!=\d^d_f\!=\d^{d_{1}}_e\!=\d^{d_{1}}_f\!=\d^{d_{2}}_e\!=\d^{d_{2}}_f=0.
\end{eqnarray*}
Thus $\dir_\bz(\DD)$ can be simplified as
\begin{eqnarray*}
&\!\!\!\!\!\!\dir_\bz(\DD)=\!\!\!&\mbox{$\sum\limits_{\bm}$}\big(\d^{ef}_{\bm}\EE_{\bm}\otimes\FF_{-\bm}
+\d^{fe}_{\bm}\FF_{\bm}\otimes\EE_{-\bm}\big)+\d^d_d\DD\otimes\DD
+\d^d_{d_{1}}\DD\otimes\DD_{1}+\d^d_{d_{2}}\DD\otimes\DD_{2}\\[-5pt]
&&+\mbox{$\sum\limits_{\bk}$}\big(\d^{gg}_{\bk}\GG_{\bk}\otimes\GG_{-\bk}
+\d^{gh}_{\bk}\GG_{\bk}\otimes\HH_{-\bk}+\d^{hg}_{\bk}\HH_{\bk}\otimes\GG_{-\bk}
+\d^{hh}_{\bk}\HH_{\bk}\otimes\HH_{-\bk}\big)\\[-5pt]
&&+\d^{d_1}_d\DD_{1}\otimes\DD+\d^{d_{1}}_{d_{1}}\DD_{1}\otimes\DD_{1}
+\d^{d_{1}}_{d_{2}}\DD_{1}\otimes\DD_{2}+\d^{d_{2}}_d\DD_{2}\otimes\DD
+\d^{d_{2}}_{d_{1}}\DD_{2}\otimes\DD_{1}
+\d^{d_{2}}_{d_{2}}\DD_{2}\otimes\DD_{2}.
\end{eqnarray*}
Similarly, write (cf.~statement after \eqref{first-})
{\small\begin{eqnarray*}
&\!\!\!\!\!\!\dir_\bz(\EE_{\bz})=\!\!\!&\mbox{$\sum\limits_{\bm}$}\big(\e^{ee}_{\bm}\EE_{\bm}\!\otimes\!\EE_{-\bm}
\!+\e^{ef}_{\bm}\EE_{\bm}\!\otimes\!\FF_{-\bm}\!+\e^{fe}_{\bm}\FF_{\bm}\!\otimes\!\EE_{-\bm}\!
+\e^{ff}_{\bm}\FF_{\bm}\!\otimes\!\FF_{-\bm}\big)
\!+\!\mbox{$\sum\limits_{\bk}$}\big(\e^{eg}_{\bk}\EE_{\bk}\!\otimes\!\GG_{-\bk}\\[-5pt]
&&+\e^{eh}_{\bk}\EE_{\bk}\!\otimes\!\HH_{-\bk}\!\!+\e^{he}_{\bk}\HH_{\bk}\!\otimes\!\EE_{-\bk}\!\!
+\e^{ge}_{\bk}\GG_{\bk}\!\otimes\!\EE_{-\bk}\!\!+\e^{fg}_{\bk}\FF_{\bk}\!\otimes\!\GG_{-\bk}
\!+\e^{fh}_{\bk}\FF_{\bk}\!\otimes\!\HH_{-\bk}\!+\e^{gf}_{\bk}\GG_{\bk}\!\otimes\!\FF_{-\bk}\\
&&+\e^{hf}_{\bk}\HH_{\bk}\!\otimes\!\FF_{-\bk}\!+\e^{gg}_{\bk}\GG_{\bk}\!\otimes\!\GG_{-\bk}
\!+\e^{gh}_{\bk}\GG_{\bk}\!\otimes\!\HH_{-\bk}+\e^{hg}_{\bk}\HH_{\bk}\!\otimes\!\GG_{-\bk}\!
+\e^{hh}_{\bk}\HH_{\bk}\otimes\HH_{-\bk}\big)\\
&&+\e^e_d\EE_{\bz}\otimes\DD\!+\e^e_{d_{1}}\EE_{\bz}\otimes\DD_{1}
\!+\e^e_{d_{2}}\EE_{\bz}\otimes\DD_{2}\!+\e^f_d\FF_{\bz}\otimes\DD
\!+\e^f_{d_{1}}\FF_{\bz}\otimes\DD_{1}\!+\e^f_{d_{2}}\FF_{\bz}\!\otimes\!\DD_{2}
\!+\e^d_e\DD\!\otimes\!\EE_{\bz}\\
&&+\e^d_f\DD\otimes\FF_{\bz}+\e^d_d\DD\!\otimes\!\DD+\e^d_{d_{1}}\DD\!\otimes\!\DD_{1}
+\e^d_{d_{2}}\DD\otimes\DD_{2}+\e^{d_{1}}_e\DD_{1}\!\otimes\!\EE_{\bz}\!+\e^{d_{1}}_f\DD_{1}\!\otimes\!\FF_{\bz}\!
+\e^{d_{1}}_d\DD_{1}\!\otimes\!\DD\\
&&+\e^{d_{1}}_{d_{1}}\DD_{1}\!\otimes\!\DD_{1}\!\!+\!\e^{d_{1}}_{d_{2}}\DD_{1}\!\otimes\!\DD_{2}\!
+\e^{d_{2}}_e\DD_{2}\!\otimes\!\EE_{\bz}\!+\e^{d_{2}}_f\DD_{2}\!\otimes\!\FF_{\bz}\!
+\e^{d_{2}}_d\DD_{2}\!\otimes\!\DD\!+\e^{d_{2}}_{d_{1}}\DD_{2}\!\otimes\!\DD_{1}
\!+\e^{d_{2}}_{d_{2}}\DD_{2}\!\otimes\!\DD_{2}.
\end{eqnarray*}\small}%
Note that we have
\begin{eqnarray*}
&&\EE_{\bz}\cdot(\HH_{\bk}\otimes\GG_{-\bk})=\EE_{\bk}\otimes\GG_{-\bk}
-\HH_{\bk}\otimes\EE_{-\bk}, \ \ \ \ \EE_{\bz}\cdot(\FF_{\bk}\otimes\EE_{-\bk})=\GG_{\bk}\otimes\EE_{-\bk}-\HH_{\bk}\otimes\EE_{-\bk},\\[8pt]
&&\EE_{\bz}\cdot(\GG_{\bk}\otimes\GG_{-\bk})=-\EE_{\bk}\otimes\GG_{-\bk}
-\GG_{\bk}\otimes\EE_{-\bk},\ \ \EE_{\bz}\cdot(\HH_{\bk}\otimes\HH_{-\bk})=\EE_{\bk}\otimes\HH_{-\bk}+\HH_{\bk}\otimes\EE_{-\bk},\\[8pt]
&&\EE_{\bz}\cdot(\FF_{\bz}\otimes\EE_{\bz})=\DD\otimes\EE_{\bz},
\ \ \ \ \ \EE_{\bz}\cdot(\DD\otimes\!\DD_{1})=-2\EE_{\bz}\otimes\DD_{1}, \ \ \ \ \ \EE_{\bz}\cdot(\DD\otimes\DD_{2})=-2\EE_{\bz}\otimes\DD_{2},\\[8pt]
&&\EE_{\bz}\cdot(\EE_{\bz}\otimes\FF_{\bz})=\EE_{\bz}\otimes\DD,\ \ \ \ \ \EE_{\bz}\cdot(\DD_{1}\otimes\DD)=-2\DD_{1}\otimes\EE_{\bz},\ \ \ \ \ \EE_{\bz}\cdot(\DD_{2}\otimes\DD)=-2\DD_{2}\otimes\EE_{\bz}.
\end{eqnarray*}
The first four equations imply
\begin{eqnarray*}
\EE_{\bk}\otimes\HH_{-\bk}\!\!\!&=&\!\!\!\frac{1}{2}\EE_{\bz}\cdot(\GG_{\bk}\otimes\GG_{-\bk}
+\HH_{\bk}\otimes\GG_{-\bk}+2\HH_{\bk}\otimes\HH_{-\bk}+\FF_{\bk}\otimes\EE_{-\bk}),\\
\HH_{\bk}\otimes\EE_{-\bk}\!\!\!&=&\!\!\!-\frac{1}{2}\EE_{\bz}\cdot(\GG_{\bk}\otimes\GG_{-\bk}
+\FF_{\bk}\otimes\EE_{-\bk}+\HH_{\bk}\otimes\GG_{-\bk}),\\
\EE_{\bk}\otimes\GG_{-\bk}\!\!\!&=&\!\!\!-\frac{1}{2}\EE_{\bz}\cdot(\GG_{\bk}\otimes\GG_{-\bk}
+\FF_{\bk}\otimes\EE_{-\bk}-\HH_{\bk}\otimes\GG_{-\bk}),\\
\GG_{\bk}\otimes\EE_{-\bk}\!\!\!&=&\!\!\!-\frac{1}{2}\EE_{\bz}\cdot(\GG_{\bk}\otimes\GG_{-\bk}
+\HH_{\bk}\otimes\GG_{-\bk}-\FF_{\bk}\otimes\EE_{-\bk}).
\end{eqnarray*}
Thus, by replacing $\dir_\bz$ by $\dir_\bz-u_{\rm inn},$ where $u$ is some combination of $\GG_{\bk}\otimes\GG_{-\bk}$, $\HH_{\bk}\otimes\HH_{-\bk}$, $\HH_{\bk}\otimes\GG_{-\bk}$, $\FF_{\bm}\otimes\EE_{-\bm}$, $\EE_{\bz}\otimes\FF_{\bz}$, $\DD_1\otimes\DD$, $\DD\otimes\DD_{1}$, $\DD\otimes\DD_{2}$, and $\DD_{2}\otimes\DD,$ we can  suppose
{\small\begin{eqnarray*}
\dir_\bz(\EE_{\bz})\!\!\!&=\!\!\!&\mbox{$\sum\limits_{\bm}$}\big(\e^{ee}_{\bm}\EE_{\bm}\!\otimes\!\EE_{-\bm}
\!+\e^{ef}_{\bm}\EE_{\bm}\!\otimes\!\FF_{-\bm}\!+\e^{fe}_{\bm}\FF_{\bm}\!\otimes\!\EE_{-\bm}\!
+\e^{ff}_{\bm}\FF_{\bm}\!\otimes\!\FF_{-\bm}\big)
\!+\!\mbox{$\sum\limits_{\bk}$}\big(\e^{fg}_{\bk}\FF_{\bk}\!\otimes\!\GG_{-\bk}\\[-5pt]
&&+\e^{fh}_{\bk}\FF_{\bk}\!\otimes\!\HH_{-\bk}\!\!+\e^{gf}_{\bk}\GG_{\bk}\!\otimes\!\FF_{-\bk}
\!\!+\e^{hf}_{\bk}\HH_{\bk}\!\otimes\!\FF_{-\bk}\!\!+\e^{gg}_{\bk}\GG_{\bk}\!\otimes\!\GG_{-\bk}
\!\!+\e^{gh}_{\bk}\GG_{\bk}\!\otimes\!\HH_{-\bk}\!\!+\e^{hg}_{\bk}\HH_{\bk}\!\otimes\!\GG_{-\bk}\\
&&+\e^{hh}_{\bk}\HH_{\bk}\otimes\HH_{-\bk}\big)+\e^f_d\FF_{\bz}\otimes\DD
+\e^f_{d_{1}}\FF_{\bz}\otimes\DD_{1}+\e^f_{d_{2}}\FF_{\bz}\otimes\DD_{2}
+\e^d_f\DD\otimes\FF_{\bz}\!+\e^d_d\DD\otimes\DD\\
&&+\e^d_{d_{1}}\DD\otimes\DD_{1}
+\e^d_{d_{2}}\DD\otimes\DD_{2}+\e^{d_{1}}_f\DD_{1}\otimes\FF_{\bz}
+\e^{d_{1}}_d\DD_{1}\otimes\DD+\e^{d_{1}}_{d_{1}}\DD_{1}\otimes\DD_{1}
+\e^{d_{1}}_{d_{2}}\DD_{1}\otimes\DD_{2}\\
&&+\e^{d_{2}}_f\DD_{2}\otimes\FF_{\bz}
+\e^{d_{2}}_d\DD_{2}\otimes\DD+\e^{d_{2}}_{d_{1}}\DD_{2}\otimes\DD_{1}
+\e^{d_{2}}_{d_{2}}\DD_{2}\otimes\DD_{2}.
\end{eqnarray*}\small}%
Applying $\dir_\bz$ to $[\DD,\EE_{\bz}]=2\EE_{\bz},$ we obtain
\begin{eqnarray*}
\DD\cdot\dir_\bz(\EE_{\bz})-\EE_{\bz}\cdot\dir_\bz(\DD)=2\dir_\bz(\EE_{\bz}).
\end{eqnarray*}
Comparing both sides of the above equation, one sees that
\begin{eqnarray}
&&\dir_\bz(\EE_{\bz})=0,\label{equa-e0}\\
&&\d^d_{d_1}=\d^d_{d_2}=\d^{d_1}_d=\d^{d_2}_d=0,\nonumber\\
&&\d^{ef}_{\bk}-\d^{gg}_{\bk}+\d^{hg}_{\bk}=0,\ \ \d^{ef}_{\bk}+\d^{gh}_{\bk}-\d^{hh}_{\bk}=0,\ \ \d^{ef}_{\bz}-2\d^d_d=0,\nonumber\\
&&\d^{fe}_{\bk}+\d^{hg}_{\bk}-\d^{hh}_{\bk}=0,\ \ \d^{fe}_{\bk}-\d^{gg}_{\bk}+\d^{gh}_{\bk}=0,\ \ \d^{fe}_{\bz}-2\d^d_d=0.\label{equa-d0-1}
\end{eqnarray}
Write  (cf.~statement after \eqref{first-})
{\small\begin{eqnarray*}\dir_\bz(\FF_{\bz})
\!\!\!&=\!\!\!&
\mbox{$\sum\limits_{\bm}$}\big(\f^{ee}_{\bm}\EE_{\bm}\!\otimes\!\EE_{-\bm}
\!+\f^{ef}_{\bm}\EE_{\bm}\!\otimes\!\FF_{-\bm}\!+\f^{fe}_{\bm}\FF_{\bm}\!\otimes\!\EE_{-\bm}\!
+\f^{ff}_{\bm}\FF_{\bm}\!\otimes\!\FF_{-\bm}\big)
\!+\!\mbox{$\sum\limits_{\bk}$}\big(\f^{eg}_{\bk}\EE_{\bk}\!\otimes\!\GG_{-\bk}\nonumber\\[-5pt]
&&+\f^{eh}_{\bk}\EE_{\bk}\!\otimes\!\HH_{-\bk}\!\!+\f^{he}_{\bk}\HH_{\bk}\!\otimes\!\EE_{-\bk}\!\!
+\f^{ge}_{\bk}\GG_{\bk}\!\otimes\!\EE_{-\bk}\!\!+\f^{fg}_{\bk}\FF_{\bk}\!\otimes\!\GG_{-\bk}
\!+\f^{fh}_{\bk}\FF_{\bk}\!\otimes\!\HH_{-\bk}\!+\f^{gf}_{\bk}\GG_{\bk}\!\otimes\!\FF_{-\bk}\nonumber\\
&&+\f^{hf}_{\bk}\HH_{\bk}\!\otimes\!\FF_{-\bk}\!+\f^{gg}_{\bk}\GG_{\bk}\!\otimes\!\GG_{-\bk}
\!+\f^{gh}_{\bk}\GG_{\bk}\!\otimes\!\HH_{-\bk}+\f^{hg}_{\bk}\HH_{\bk}\!\otimes\!\GG_{-\bk}\!
+\f^{hh}_{\bk}\HH_{\bk}\otimes\HH_{-\bk}\big)\nonumber\\
&&+\f^e_d\EE_{\bz}\otimes\DD\!+\f^e_{d_{1}}\EE_{\bz}\otimes\DD_{1}
\!+\f^e_{d_{2}}\EE_{\bz}\otimes\DD_{2}\!+\f^f_d\FF_{\bz}\otimes\DD
\!+\f^f_{d_{1}}\FF_{\bz}\otimes\DD_{1}\!+\f^f_{d_{2}}\FF_{\bz}\!\otimes\!\DD_{2}
\!+\f^d_e\DD\!\otimes\!\EE_{\bz}\nonumber\\
&&+\f^d_f\DD\otimes\FF_{\bz}+\f^d_d\DD\!\otimes\!\DD+\f^d_{d_{1}}\DD\!\otimes\!\DD_{1}
+\f^d_{d_{2}}\DD\otimes\DD_{2}+\f^{d_{1}}_e\DD_{1}\!\otimes\!\EE_{\bz}\!+\f^{d_{1}}_f\DD_{1}\!\otimes\!\FF_{\bz}\!
+\f^{d_{1}}_d\DD_{1}\!\otimes\!\DD\nonumber\\
&&\!+\f^{d_{1}}_{d_{1}}\DD_{1}\!\otimes\!\DD_{1}\!\!+\!\f^{d_{1}}_{d_{2}}\DD_{1}\!\otimes\!\DD_{2}\!
+\f^{d_{2}}_e\DD_{2}\!\otimes\!\EE_{\bz}\!+\f^{d_{2}}_f\DD_{2}\!\otimes\!\FF_{\bz}\!
+\f^{d_{2}}_d\DD_{2}\!\otimes\!\DD\!+\f^{d_{2}}_{d_{1}}\DD_{2}\!\otimes\!\DD_{1}
\!+\f^{d_{2}}_{d_{2}}\DD_{2}\!\otimes\!\DD_{2}.\nonumber
\end{eqnarray*}\small}%
Applying $\dir_\bz$ to $[\DD,\FF_{\bz}]=-2\FF_{\bz},$ then $\DD\cdot\dir_\bz(\FF_{\bz})-\FF_{\bz}\cdot\dir_\bz(\DD)=-2\dir_\bz(\FF_{\bz})$. By \eqref{equa-d0-1}, $\FF_{\bz}\cdot\dir_\bz(\DD)=0$. Thus $\DD\cdot\dir_\bz(\FF_{\bz})=-2\dir_\bz(\FF_{\bz})$, from which, one can deduce
\begin{eqnarray*}
\f^{ee}_{\bm}\!\!\!&=&\!\!\f^{ef}_{\bm}=\f^{eg}_{\bk}=\f^{eh}_{\bk}
=\f^{fe}_{\bm}=\f^{ff}_{\bm}
=\f^{ge}_{\bk}=\f^{he}_{\bk}=\f^{gg}_{\bk}\nonumber\\
\!\!\!&=&\!\!\f^{gh}_{\bk}=\f^{hg}_{\bk}=\f^{hh}_{\bk}=\f^e_d=\f^e_{d_1}=\f^e_{d_2}
=\f^d_e=\f^{d_1}_e=\f^{d_2}_e\nonumber\\
\!\!\!&=&\!\!\f^d_d=\f^d_{d_1}=\f^{d_1}_d=\f^{d_1}_{d_1}
=\f^{d_2}_d=\f^{d_2}_{d_1}=\d^d_{d_2}=\d^{d_1}_{d_2}=\d^{d_2}_{d_2}=0.
\end{eqnarray*}
Applying $\dir_\bz$ to $[\EE_{\bz},\FF_{\bz}]=\DD$ and using \eqref{equa-e0}, we obtain $\EE_{\bz}\cdot\dir_\bz(\FF_{\bz})=\dir_\bz(\DD)$, from which and the above identities,  it is no wonder that
\begin{eqnarray*}
&&\d^{ef}_{\bz}+2\f^d_f=\d^{fe}_{\bz}+2\f^f_d=\d^d_d-\f^f_d-\f^d_f=0,\\
&&\d^{d_1}_{d_1}=\d^{d_1}_{d_2}=\d^{d_2}_{d_1}=\d^{d_2}_{d_2}
=\f^f_{d_1}=\f^f_{d_2}=\f^{d_1}_f=\f^{d_2}_f=0,\nonumber\\
&&\d^{ef}_{\bk}=\f^{hf}_{\bk}-\f^{gh}_{\bk},\ \ \d^{fe}_{\bk}=\f^{fh}_{\bk}-\f^{fg}_{\bk},\ \
\d^{gg}_{\bk}=\f^{fg}_{\bk}+\f^{gf}_{\bk},\nonumber\\
&&\d^{gh}_{\bk}=\f^{fh}_{\bk}-\f^{gf}_{\bk},\ \
\d^{hg}_{\bk}=\f^{hf}_{\bk}-\f^{fg}_{\bk},\ \
\d^{hh}_{\bk}=-\f^{hf}_{\bk}-\f^{fh}_{\bk}.\nonumber\\
\end{eqnarray*}
Using these identities and \eqref{equa-d0-1}, one deduces that
$
\dir_\bz(\DD)=\dir_\bz(\FF_{\bz})=0.
$ 
By the fact that $\WL_{\bz}=\C\EE_{\bz}\oplus\C\FF_{\bz}\oplus\C\DD\oplus\C\DD_1\oplus\C\DD_2$, we obtain the lemma.
\end{proof}

\begin{rema}\label{remak-1}\rm
We always use the convention that if an undefined symbol technically appears in an expression, we always treat it as zero; for instance, $\GG_{0,0}=\HH_{0,0}=0$.
\end{rema}

\begin{lemm}\label{lemm-L+}
By replacing $\dir_\bz$ by $\dir_\bz-u_{\rm inn}$ for some $u_{\rm inn}\in
\VV_\bz$, one can suppose $\dir_\bz(\WL_{m,n})=0$ for $m,n\in\Z^+$.
\end{lemm}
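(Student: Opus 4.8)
\textbf{Proof proposal for Lemma \ref{lemm-L+}.}

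The plan is to push the vanishing $\dir_\bz(\WL_\bz)=0$ (Lemma \ref{lemm-L0}) outward into the positive quadrant by induction, correcting along the way by one more inner derivation. First I would record, for $\bk=(m,n)$ with $m,n\in\Z^+\sm\{\bz\}$, the bracket relations that let one reach $\WL_\bk$ from $\WL_\bz$: namely $[\GG_\bk,\EE_\bz]$, $[\HH_\bk,\EE_\bz]$, $[\GG_\bk,\FF_\bz]$, $[\HH_\bk,\FF_\bz]$ express $\EE_\bk,\FF_\bk$ (up to known scalars and $q$-powers coming from \eqref{re-ss}), while the diagonal pieces $\GG_\bk,\HH_\bk$ are reached via $[\EE_\bm,\FF_{\bk-\bm}]$ for suitable $\bm$. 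Applying $\dir_\bz$ to these relations and using $\dir_\bz(\WL_\bz)=0$ turns each into an equation of the form $x\cdot\dir_\bz(y)=(\text{scalar})\,\dir_\bz(\text{basis element of }\WL_\bk)$ where $x\in\WL_\bz$, $y\in\WL_\bk$; so the values of $\dir_\bz$ on all of $\WL_\bk$ are controlled by its values on a small generating subset, together with the adjoint action of the finite-dimensional reductive algebra $\WL_\bz=\sl\oplus\C\DD_1\oplus\C\DD_2$ on $\VV_\bk$.

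Next I would set up the induction on the total degree $m+n$ (or on the partial order on $\Z^+\times\Z^+$). The base case $m+n\le 1$, i.e. $\WL_{1,0}$ and $\WL_{0,1}$, must be done by hand: write the general expansion of $\dir_\bz(\GG_{1,0})$, $\dir_\bz(\HH_{1,0})$, $\dir_\bz(\EE_{1,0})$, $\dir_\bz(\FF_{1,0})$ as in the displayed formulas of Lemma \ref{lemm-L0} (with $\C[q^{\pm1}]$-coefficients), impose the constraints coming from applying $\dir_\bz$ to $[\DD,\,\cdot\,]$, $[\EE_\bz,\FF_{1,0}]$, $[\EE_{1,0},\FF_\bz]$, $[\GG_{1,0},\EE_\bz]$, etc., and solve. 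Crucially, after collecting the residual freedom, exhibit it as $x\mapsto x\cdot u$ for an explicit $u\in\VV_\bz$ — this is the inner derivation $u_{\rm inn}$ the lemma promises — and replace $\dir_\bz$ by $\dir_\bz-u_{\rm inn}$ to kill $\dir_\bz$ on $\WL_{1,0}$ and $\WL_{0,1}$ simultaneously. One should check this single $u$ does not disturb $\dir_\bz(\WL_\bz)=0$ (i.e. that $\WL_\bz\cdot u$ lies in the span that was already normalized, or is itself zero), which is why the choice of $u$ has to be made carefully rather than greedily.

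For the inductive step, assume $\dir_\bz(\WL_\bl)=0$ for all $\bl\in\Z^+\times\Z^+$ with $|\bl|<|\bk|$. Take any $\bk=(m,n)$ with $m,n\ge 0$ and $|\bk|\ge 2$; then either $m\ge 1$ or $n\ge 1$, so $\bk=\bk'+\bk''$ with both summands in $\Z^+\times\Z^+$ of strictly smaller total degree (e.g. split off $(1,0)$ or $(0,1)$, or use $\bk=\bm+(\bk-\bm)$ with $\bm$ chosen among $(m,0),(0,n),(1,0),\dots$ so as to avoid the degenerate cases flagged in Remark \ref{remak-1} and the ``$\bm+\bm'\ne\bz$'' caveat in \eqref{re-ss}). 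Applying $\dir_\bz$ to $[\WL_{\bk'},\WL_{\bk''}]\subseteq\WL_\bk$ and using the inductive hypothesis gives $\dir_\bz\big([y',y'']\big)=0$ for the relevant generators, and running through enough such brackets shows $\dir_\bz$ vanishes on a generating set of $\WL_\bk$ as a module over the already-killed $\WL_\bz$; combined with the degree-$\bz$ relations (the analogues of \eqref{equa-e0}--\eqref{equa-d0-1}) one forces $\dir_\bz(\WL_\bk)=0$.

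The main obstacle is the base case together with the bookkeeping of which brackets suffice: because $[\EE_\bm,\EE_{\bm'}]=[\FF_\bm,\FF_{\bm'}]=[\GG_\bk,\HH_{\bk'}]=0$, the positive part is \emph{not} generated in degree $1$ alone, so for a generic $\bk$ one must verify that the chosen decompositions $\bk=\bk'+\bk''$ actually hit every basis vector $\EE_\bk,\FF_\bk,\GG_\bk,\HH_\bk$ — in particular that the $q$-power coefficients $q^{k_2m_1}-q^{k_1m_2}$ appearing in $[\EE_\bm,\FF_{\bk-\bm}]$ are nonzero for at least one admissible $\bm$, which uses that $q$ is not a root of unity. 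I expect the degree-$1$ computation (and the correct identification of $u_{\rm inn}$) to be where essentially all the work lies; the inductive propagation is then formal, paralleling Lemmas \ref{lemm-D12} and \ref{lemm-L0} and invoking Lemma \ref{Legr} to conclude vanishing from $x\cdot(\text{something})=0$.
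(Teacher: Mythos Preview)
Your inductive propagation breaks down on the coordinate axes, and this is not a bookkeeping issue but a structural obstruction. Take $\bk=(2,0)$: the only decomposition $\bk=\bk'+\bk''$ with both summands nonzero in $\Z^+\times\Z^+$ is $(1,0)+(1,0)$, and $[\WL_{1,0},\WL_{1,0}]$ is spanned by $E_{2,0},F_{2,0},G_{2,0}-H_{2,0}$ only. To reach $G_{2,0}+H_{2,0}$ you would need the $\WL_\bz$-action, but $G_\bn+H_\bn$ is annihilated by $E_\bz,F_\bz,d$ (and $d_1,d_2$ act by scalars), so it is a genuine $\WL_\bz$-invariant and cannot be generated from the $3$-dimensional piece. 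The same happens for every $(m,0)$ and $(0,n)$ with $m,n\ge 2$: the coefficient $q^{k'_2k''_1}-q^{k''_2k'_1}$ in $[G_{\bk'},G_{\bk''}]$ and $[H_{\bk'},H_{\bk''}]$ vanishes whenever both summands lie on the same axis, while $[E_\bm,F_{\bm'}]$ only ever produces a combination $q^{\cdots}G-q^{\cdots}H$, never $G$ or $H$ alone. So your claimed generating set of $\WL_\bk$ over $\WL_\bz$ simply does not exist along the axes, and the induction does not close.

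This is exactly why the paper does \emph{not} induct on $m+n$. Instead it carries out the full explicit computation for $\dir_\bz(G_{0,1})$ and $\dir_\bz(H_{1,0})$ simultaneously --- this is where the two non-obvious inner derivations \eqref{inner-u} and \eqref{inner-v} and the finiteness arguments of Claims~\ref{claim1} and~\ref{claim2} enter --- and then states that the same method gives $\dir_\bz(G_{0,n})=\dir_\bz(H_{n,0})=0$ for every $n\in\Z^+$. Only after all of $\{G_{0,n},H_{n,0}:n\ge 1\}$ is killed can one invoke generation: the positive quadrant is generated as a Lie algebra by $\WL_\bz$ together with \emph{all} the $G_{0,n}$ and $H_{n,0}$, not just the degree-$1$ ones. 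Your base case, even if completed, would leave infinitely many axis elements undetermined.
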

\begin{proof}Using Lemma \ref{lemm-L0} and the fact that $[d,\GG_{0,1}]=0$, we deduce $\DD\cdot\dir_\bz(\GG_{0,1})=0$. Thus we can write  (cf.~statement after \eqref{first-})
{\small\begin{eqnarray}\label{equa-G01}\!\!\!\!\!\!\!\!\!\!\dir_\bz(\GG_{0,1})=\!\!
\!\!\!\!\!\!\!\!\!&&\mbox{$\sum\limits_{(m,n)\in\BZ}$}\big(\g^{ef}_{m,n}\EE_{m,n}
\otimes\FF_{-m,1-n}+\g^{fe}_{m,n}\FF_{m,n}\otimes\EE_{-m,1-n}
+\g^{gg}_{mn}\GG_{m,n}\otimes\GG_{-m,1-n}
\nonumber\\[-10pt]
\!\!\!\!\!\!\!\!\!&&\ \ \ \ \ \ \ \ \ \ \ +\g^{gh}_{m,n}\GG_{m,n}\otimes\HH_{-m,1-n}
+\g^{hg}_{m,n}\HH_{m,n}\otimes\GG_{-m,1-n}
+\g^{hh}_{m,n}\HH_{m,n}\otimes\HH_{-m,1-n}\big)\nonumber\\
\!\!\!\!\!\!\!\!\!&&+\g^g_d\GG_{0,1}\otimes\DD+\g^d_g\DD\otimes\GG_{0,1}
+\g^g_{d_{1}}\GG_{0,1}\otimes\DD_{1}+\g^{d_{1}}_g\DD_{1}\otimes\GG_{0,1}
+\g^g_{d_{2}}\GG_{0,1}\otimes\DD_{2}+\g^{d_{2}}_g\DD_{2}\otimes\GG_{0,1}\nonumber\\
\!\!\!\!\!\!\!\!\!&&+\g^h_d\HH_{0,1}{\sc\!}\otimes{\sc\!}\DD{\sc\!}+{\sc\!}\g^d_h\DD{\sc\!}\otimes{\sc\!}\HH_{0,1}
{\sc\!}+{\sc\!}\g^h_{d_{1}}\HH_{0,1}{\sc\!}\otimes{\sc\!}\DD_{1}{\sc\!}+{\sc\!}\g^{d_{1}}_h\DD_{1}{\sc\!}\otimes{\sc\!}\HH_{0,1}
{\sc\!}+{\sc\!}\g^h_{d_{2}}\HH_{0,1}{\sc\!}\otimes{\sc\!}\DD_{2}{\sc\!}+{\sc\!}\g^{d_{2}}_h\DD_{2}{\sc\!}\otimes{\sc\!}\HH_{0,1}.
\end{eqnarray}}%
Note that
\begin{eqnarray*}
&&\GG_{0,1}\cdot( \DD_2\otimes\DD_2)=-\DD_2\otimes\GG_{0,1}-\GG_{0,1}\otimes\DD_2,\\
&&\GG_{0,1}\cdot( \DD_1\otimes\DD_2)=-\DD_1\otimes\GG_{0,1},\ \ \GG_{0,1}\cdot( \DD_2\otimes\DD_1)=-\GG_{0,1}\otimes\DD_1.
\end{eqnarray*}
Thus by replacing $\dir_\bz$ by $\dir_\bz-u_{\rm inn},$ where $u$ is some combination of   $\DD_1\otimes\DD_2$, $\DD_2\otimes\DD_1$ and $\DD_2\otimes\DD_2$,
one can assume
\begin{eqnarray}\label{equa-Ginn}
\g^g_{d_1}=\g^g_{d_2}=\g^{d_1}_g=0.
\end{eqnarray}
Similarly, we can deduce $\DD\cdot\dir_\bz(\HH_{1,0})=0$, and thus
$\dir_\bz(\HH_{1,0})$ can be written as (cf.~statement after \eqref{first-})
{\small\begin{eqnarray}\label{equa-H10}\!\!\!\!\!\!\!\dir_\bz(\HH_{1,0})=
\!\!\!\!\!\!\!\!\!&&\mbox{$\sum\limits_{(m,n)\in\BZ}$}\!\big(\h^{ef}_{m,n}\EE_{m,n}
\otimes\FF_{1-m,-n}+\h^{fe}_{m,n}\FF_{m,n}\!\otimes\!\EE_{1-m,-n}
+\h^{gg}_{m,n}\GG_{m,n}\otimes\GG_{1-m,-n}\nonumber\\[-10pt]
\!\!\!\!\!\!\!\!\!&&\ \ \ \ \ \ \ \ \,+\h^{gh}_{m,n}\GG_{m,n}\otimes\HH_{1-m,-n}+\h^{hg}_{m,n}\HH_{m,n}\otimes\GG_{1-m,-n}
+\h^{hh}_{m,n}\HH_{m,n}\otimes\HH_{1-m,-n}\big)\nonumber\\
\!\!\!\!\!\!\!\!\!&&+\h^g_d\GG_{1,0}\otimes\DD+\h^d_g\DD\otimes\GG_{1,0}
+\h^g_{d_{1}}\GG_{1,0}\otimes\DD_{1}+\h^{d_{1}}_g\DD_{1}\otimes\GG_{1,0}
+\h^g_{d_{2}}\GG_{1,0}\otimes\DD_{2}
+\h^{d_{2}}_g\DD_{2}\!\otimes\!\GG_{1,0}\nonumber\\
\!\!\!\!\!\!\!\!\!&&+\h^h_d\HH_{1,0}{\sc\!}\otimes{\sc\!}\DD{\sc\!}+{\sc\!}\h^d_h\DD{\sc\!}\otimes{\sc\!}\HH_{1,0}
{\sc\!}+{\sc\!}\h^h_{d_{1}}\HH_{1,0}{\sc\!}\otimes{\sc\!}\DD_{1}{\sc\!}+{\sc\!}\h^{d_{1}}_h\DD_{1}{\sc\!}\otimes{\sc\!}
\HH_{1,0}
{\sc\!}+{\sc\!}\h^h_{d_{2}}\HH_{1,0}{\sc\!}\otimes{\sc\!}\DD_{2}{\sc\!}+{\sc\!}\h^{d_{2}}_h\DD_{2}{\sc\!}\otimes{\sc\!}
\HH_{1,0}.
\end{eqnarray}\small}%
Since $\HH_{1,0}{\sc\!}\cdot{\sc\!}(\DD_1{\sc\!}\otimes\DD_1)$ $=-\HH_{1,0}\otimes\DD_1-\DD_1\otimes\HH_{1,0}$, by replacing $\dir_\bz$ by $\dir_\bz+u_{\rm inn}$ with $u=\h^{d_1}_h\DD_1\otimes\DD_1$,
we can suppose $\h^{d_1}_h=0$.
Applying $\dir_{\bz}$ to $[\GG_{0,1},\HH_{1,0}]=0$, we obtain $\GG_{0,1}\cdot\dir_{\bz}(\HH_{1,0})=\HH_{1,0}\cdot\dir_{\bz}(\GG_{0,1})$, which implies
\begin{eqnarray}
&&\g^h_d=\g^h_{d_1}=\g^h_{d_2}=\g^d_h=\g^{d_1}_h=\g^{d_2}_h=0,\label{equa-g01-0}\\
&&\h^g_d=\h^g_{d_2}=\h^g_{d_1}=\h^{d_1}_g=\h^{d_2}_g=\h^d_g
=\h^h_{d_2}=\h^{d_2}_h=0,\label{equa-h10-0}\\
&&(1-q^{1-n})\g^{gh}_{m,n}=(q^m-1)\h^{gh}_{m,n-1},\ \ \, (q^{1-m}-1)\h^{hg}_{m,n}=(1-q^n)\g^{hg}_{m-1,n},\label{equa-ggh-hhg}\\
&&(1-q^n)\g^{hh}_{m-1,n}=(q^{1-n}-1)\g^{hh}_{m,n},\ \ \ \, (q^m-1)\h^{gg}_{m,n-1}=(1-q^{1-m})\h^{gg}_{m,n},\label{equa-ghh-hgg}\\
&&q^m\h^{ef}_{m,n-1}\!-\h^{ef}_{m,n}\!=\g^{ef}_{m,n}\!-\!q^n\g^{ef}_{m-1,n},\ \ q^{1-m}\h^{fe}_{m,n}\!-\h^{fe}_{m,n-1}\!=\g^{fe}_{m-1,n}\!\!
-q^{1-n}\g^{fe}_{m,n}.\label{equa-gh-ef}
\end{eqnarray}
We observe the following crucial result.
\begin{clai}\label{claim1}
We have $\g^{hh}_{m,n}=\h^{gg}_{m,n}=0$ for all $(m,n)\in\BZ$.
\end{clai}

To prove the claim, consider the first equation in \eqref{equa-ghh-hgg}: If $n=0$, then $(1-q)\g^{hh}_{m,0}=0$, which implies $\g^{hh}_{m,0}=0$ for $m\in\Z$.
If $n=1$, then $(1-q)\g^{hh}_{m-1,1}=0$, which implies $\g^{hh}_{m,1}=0$ for $m\in\Z$. Now
assume
$n\neq 0,1$. Let $\mathcal {S}_n=\{m\,|\,\g^{hh}_{m,n}\neq0\}$, which is a finite set. Assume $\mathcal {S}_n\neq\emptyset$. Set $m'={\rm min}\ \mathcal {S}_n$. Using $\g^{hh}_{m-1,n}=\frac{q^{1-n}-1}{1-q^n}\g^{hh}_{m,n}$, we obtain $\g^{hh}_{m'-1,n}\neq0$, which means $m'-1\in\mathcal {S}_n$, a contradiction with the minimality of $m'$. Thus $\mathcal {S}_n=\emptyset$ and $\g^{hh}_{m,n}=0$ for  $(m,n)\in\BZ$. Analogously, consider the second equation in \eqref{equa-ghh-hgg}: If $m=0$, then $(1-q)\h^{gg}_{0,n}=0$, which implies $\h^{gg}_{0,n}=0$ for $n\in\Z$. If $m=1$, then $(1-q)\h^{gg}_{1,n-1}=0$, which implies $\h^{gg}_{1,n}=0$ for  $n\in\Z$. If $m\neq 0,1$ is fixed, then $\mathcal {S}^g_m=\{n\,|\,\h^{gg}_{m,n}\neq0\}$ is a finite set. Assume $\mathcal {S}^g_m\neq\emptyset$ and set $n'={\rm min}\ \mathcal {S}^g_m$. Using $\h^{gg}_{m,n-1}=\frac{1-q^{1-m}}{q^m-1}\h^{gg}_{m,n}$, we have $\h^{gg}_{m,n'-1}\neq0$, which implies $n'-1\in\mathcal {S}^g_m$, a contradiction with the minimality of $n'$. Thus $\mathcal {S}^g_m=\emptyset$ and $\h^{gg}_{m,n}=0$ for $(m,n)\in\BZ$. The claim is proved.\vskip6pt

Therefore, using \eqref{equa-Ginn}, \eqref{equa-g01-0} and \eqref{equa-h10-0}, one can rewrite \eqref{equa-G01} and \eqref{equa-H10} as
{\small\begin{eqnarray}
\!\!\!\!\!\!\!\!\!\dir_\bz(\GG_{0,1})\!\!\!\!&=&\!\!\!\!\!\!\!\mbox{$\sum\limits_{(m,n)\in\BZ}$}\!
\big(\g^{ef}_{m,n}\EE_{m,n}\otimes\FF_{-m,1-n}
+\g^{fe}_{m,n}\FF_{m,n}\otimes\EE_{-m,1-n}
+\g^{gg}_{m,n}\GG_{m,n}\otimes\GG_{-m,1-n}\nonumber\\[-5pt]
&&\!\!\!\!\!\!\!\!\!+\g^{gh}_{m,n}\GG_{m,n}\!\otimes\!\HH_{-m,1-n}
\!+\g^{hg}_{m,n}\HH_{m,n}\!\otimes\!\GG_{-m,1-n}\big)\!+\g^g_d\GG_{0,1}\!\otimes\!\DD\!+\g^d_g\DD\!\otimes\!\GG_{0,1}
\!+\g^{d_2}_g\DD_2\!\otimes\!\GG_{0,1},\label{equa-g01}\\[8pt]
\!\!\!\!\!\!\!\!\!\dir_\bz(\HH_{1,0})\!\!\!\!&=&\!\!\!\!\!\!\!\mbox{$\sum\limits_{(m,n)\in\BZ}$}\!
\big(\h^{ef}_{m,n}\EE_{m,n}\otimes\FF_{1-m,-n}
+\h^{fe}_{m,n}\FF_{m,n}\otimes\EE_{1-m,-n}
+\h^{gh}_{m,n}\GG_{m,n}\otimes\!\HH_{1-m,-n}\nonumber\\[-5pt]
&&\!\!\!\!\!\!\!\!\!+\h^{hg}_{m,n}\HH_{m,n}\!\otimes\!\GG_{1-m,-n}\!
+\h^{hh}_{m,n}\HH_{m,n}\!\otimes\!\HH_{1-m,-n}\big)
\!\!+\h^h_d\HH_{1,0}\!\otimes\!\DD\!\!+\h^h_{d_{1}}\HH_{1,0}\!\otimes\!\DD_{1}
\!\!+\h^d_h\DD\!\otimes\!\HH_{1,0},\label{equa-h10}
\end{eqnarray}
\small}

\noindent where the coefficients satisfy \eqref{equa-ggh-hhg} and \eqref{equa-gh-ef}. Applying $\dir_{\bz}$ to $[\EE_{\bz},\GG_{0,1}]=-\EE_{0,1}$, we can write $\dir_{\bz}(\EE_{0,1})$ as
\begin{eqnarray}\!\!\!\!\dir_{\bz}(\EE_{0,1})=
&&\!\!\!\!\!\!\!\!\!\!
\mbox{$\sum\limits_{{(m,n)\in\BZ^*\setminus(0,1)}}$}\!
\big((\g^{gg}_{m,n}-\g^{ef}_{m,n}
-\g^{hg}_{m,n})\EE_{m,n}\otimes\GG_{-m,1-n}
+(\g^{ef}_{m,n}+\g^{gh}_{m,n})\EE_{m,n}\otimes\HH_{-m,1-n}\nonumber\\[-5pt]
&&\ \ \ \ +(\g^{gg}_{m,n}-\g^{fe}_{m,n}-\g^{gh}_{m,n})\GG_{m,n}\otimes\EE_{-m,1-n}
+(\g^{fe}_{m,n}+\g^{hg}_{m,n})\HH_{m,n}\otimes\EE_{-m,1-n}\big)\nonumber\\
&&+(2\g^d_g\!-\g^{ef}_{\bz})\EE_{\bz}\otimes\GG_{0,1}
+(2\g^g_d\!-\g^{fe}_{\bz})\GG_{0,1}\otimes\EE_{\bz}\!+\g^{ef}_{\bz}\EE_{\bz}\otimes\HH_{0,1}
\!+\g^{fe}_{0,1}\HH_{0,1}\otimes\EE_{\bz}\nonumber\\
&&+(\g^g_d-\g^{ef}_{0,1})\EE_{0,1}\otimes\DD
+(\g^d_g-\g^{fe}_{\bz})\DD\otimes\EE_{0,1}+\g^{d_2}_g\DD_2\otimes\EE_{0,1}.\label{equa-E01}
\end{eqnarray}
Using Lemma \ref{lemm-L0} and $[\EE_{\bz},\EE_{0,1}]=0$, one deduces $\EE_{\bz}\cdot\dir_{\bz}(\EE_{0,1})=0$. Thus
\begin{eqnarray}
&&\g^{ef}_{\bz}+\g^{fe}_{\bz}=2\g^d_g,\ \ \ \ \g^{fe}_{0,1}+\g^{ef}_{0,1}=2\g^g_d,\label{equa-gdgh}\\
&&\g^{ef}_{m,n}+\g^{fe}_{m,n}=\g^{gg}_{m,n}-\g^{hg}_{m,n}-\g^{gh}_{m,n}\mbox{ \ if \ } (m,n)\neq (0,0),(0,1).\label{equa-gefgh}
\end{eqnarray}
Applying $\dir_{\bz}$ to $[\EE_{0,1},\FF_{\bz}]=\GG_{0,1}-\HH_{0,1}$, by Lemma \ref{lemm-L0}, we obtain $$\dir_{\bz}(\HH_{0,1})=\dir_{\bz}(\GG_{0,1})+\FF_{\bz}\cdot\dir_{\bz}(\EE_{0,1}).$$
Using \eqref{equa-E01}, one can then write $\dir_{\bz}(\HH_{0,1})$ as
{\small\begin{eqnarray*}
\!\!\!&&\!\!\!\!\!\!\!\!\mbox{$\sum\limits_{{(m,n)\in\BZ^*\setminus(0,1)}}$}\!\!\big((\g^{gg}_{m,n}\!-\g^{hg}_{m,n}
\!-\g^{ef}_{m,n}\!-\g^{gh}_{m,n})\EE_{m,n}\!\otimes\!\FF_{-m,1\!-\!n}\!
\!+\!(\g^{gg}_{m,n}\!-\g^{hg}_{m,n}\!-\g^{fe}_{m,n}\!-\g^{gh}_{m,n})\FF_{m,n}\!\otimes\!\EE_{-m,1-n}\nonumber\\[-5pt]
\!\!\!&&\ \ +(\g^{gg}_{m,n}-\g^{gh}_{m,n}-\g^{ef}_{m,n}-\g^{fe}_{m,n})\GG_{m,n}\!\otimes\!\HH_{-m,1-n}
+\!(\g^{gg}_{m,n}-\g^{hg}_{m,n}-\g^{ef}_{m,n}-\g^{fe}_{m,n})\HH_{m,n}\!\otimes\!\GG_{-m,1-n}\nonumber\\
\!\!\!&&\ \ +(\g^{ef}_{m,n}\!\!+\g^{fe}_{m,n}\!\!-\g^{gg}_{m,n}\!\!+\g^{gh}_{m,n}\!+\g^{hg}_{m,n})\GG_{m,n}\!\otimes\!\GG\!_{-m,1\!-\!n}\!+\!(\g^{ef}_{m,n}
\!\!+\g^{fe}_{m,n}\!\!+\g^{gh}_{m,n}\!\!+\g^{hg}_{m,n})\HH_{m,n}\!\otimes\!\HH\!_{-m,1\!-\!n}\big)\\
\!\!\!&&+(2\g^d_g\!-\g^{ef}_{\bz})\EE_{\bz}\otimes\FF_{0,1}+(2\g^g_d\!-\g^{ef}_{0,1})\EE_{0,1}\otimes\FF_{\bz}
+(2\g^d_g\!-\g^{fe}_{\bz})\FF_{\bz}\otimes\EE_{0,1}+(2\g^g_d\!-\g^{ef}_{0,1})\FF_{0,1}\otimes\EE_{\bz}\\
\!\!\!&&+(\g^{ef}_{\bz}+\g^{fe}_{\bz}-2\g^d_g)\DD\otimes\GG_{0,1}
+(\g^{fe}_{0,1}+\g^{ef}_{0,1}-2\g^g_d)\GG_{0,1}\otimes\DD
+(\g^d_g-\g^{ef}_{\bz}-\g^{fe}_{\bz})\DD\otimes\HH_{0,1}\\
\!\!\!&&+(\g^g_d-\g^{ef}_{0,1}-\g^{fe}_{0,1})\HH_{0,1}\otimes\DD
+\g^{d_2}_g\DD_2\otimes\HH_{0,1}.
\end{eqnarray*}}%
By \eqref{equa-gdgh} and \eqref{equa-gefgh}, one can rewrite $\dir_{\bz}(\HH_{0,1})$ as
\begin{eqnarray*}\dir_{\bz}(\HH_{0,1})=
\!\!\!\!\!\!&\!\!\!\!\!\!&\!\!\!\!\!\!
\mbox{$\sum\limits_{(m,n)\in\BZ}$}(\g^{fe}_{m,n}\EE_{m,n}\otimes\FF_{-m,1-n}
+\g^{ef}_{m,n}\FF_{m,n}\otimes\EE_{-m,1-n})
-\g^d_g\DD\otimes\HH_{0,1}-\g^g_d\HH_{0,1}\otimes\DD\\[-5pt]
\!\!\!\!\!\!&+\!\!&\!\!\!\!\!\!\!\!\!\!\!\!\mbox{$\sum\limits_{(m,n)\in\BZ^*\setminus(0,1)}$}\!\!
(\g^{hg}_{m,n}\GG_{m,n}\!\otimes\!\HH\!\!_{-m,1-n}\!+\g^{gh}_{m,n}\HH_{m,n}\!\otimes\!\GG\!\!_{-m,1-n}
\!+\g^{gg}_{m,n}\HH_{m,n}\!\otimes\!\HH\!\!_{-m,1-n})\!+\g^{d_2}_g\DD_2\!\otimes\!\HH_{0,1}.
\end{eqnarray*}
Applying $\dir_{\bz}$ to $[\GG_{0,1},\HH_{0,1}]=0$ and using \eqref{equa-g01}, we have
\begin{eqnarray}
&&\g^{d_2}_g=0,\ \ (q^m-1)\g^{hg}_{m,n-1}=(q^{-m}-1)\g^{gh}_{m,n},\label{equa-ghg}\\
&&q^m\g^{fe}_{m,n-1}-\g^{fe}_{m,n}=q^{-m}\g^{ef}_{m,n}-\g^{ef}_{m,n-1}.\label{equa-gfe-gef}
\end{eqnarray}
Applying $\dir_{\bz}$ to $[\EE_{\bz},\HH_{1,0}]=\EE_{1,0}$, then
{\small\begin{eqnarray}
\dir_{\bz}(\EE_{1,0})&\!\!\!=\ \ &\!\!\!\!\!\!\!\!\!
\mbox{$\sum\limits_{(m,n)\in\BZ^*\setminus(1,0)}$}\!\!\big((\h^{ef}_{m,n}\!\!+\h^{hg}_{m,n})\EE_{m,n}\!\otimes\!\GG\!_{1-m,-n}
\!+\!(\h^{hh}_{m,n}\!\!-\h^{ef}_{m,n}\!\!-\h^{gh}_{m,n})\EE_{m,n}\!\otimes\!\HH\!_{1-m,-n}\nonumber\\[-6pt]
&&\!\!\!\!\!\!\!\!\!+(\h^{fe}_{m,n}+\h^{gh}_{m,n})\GG_{m,n}\otimes\EE_{1-m,-n}
+(\h^{hh}_{m,n}-\h^{fe}_{m,n}-\h^{hg}_{m,n})\HH_{m,n}\otimes\EE_{1-m,-n}\big)\nonumber\\
&&\!\!\!\!\!\!\!\!\!-\h^{ef}_{\bz}\EE_{\bz}\!\otimes\!\GG_{1,0}
\!+\h^{fe}_{1,0}\GG_{1,0}\!\otimes\!\EE_{\bz}\!-(\h^{ef}_{\bz}\!+2\h^d_h)\EE_{\bz}\!\otimes\!\HH_{1,0}
\!-(\h^{fe}_{1,0}\!+2\h^h_d)\HH_{1,0}\!\otimes\!\EE_{\bz}\nonumber\\
&&\!\!\!\!\!\!\!\!\!+(\h^h_d+\h^{ef}_{1,0})\EE_{1,0}\otimes\DD
+(\h^d_h+\h^{fe}_{\bz})\DD\otimes\EE_{1,0}+\h^h_{d_1}\EE_{1,0}\otimes\DD_1.
\label{equa-E10}
\end{eqnarray}}%
Using $[\EE_{\bz},\EE_{1,0}]=0$ and Lemma \ref{lemm-L0}, one has $\EE_{\bz}\cdot\dir_{\bz}(\EE_{1,0})=0$. Thus
\begin{eqnarray}
&&\h^{ef}_{\bz}+\h^{fe}_{\bz}+2\h^d_h=0,\ \ \ \h^{fe}_{1,0}+\h^{ef}_{1,0}+2\h^h_d=0,\label{equa-hefd}\\
&&\h^{ef}_{m,n}+\h^{fe}_{m,n}=\h^{hh}_{m,n}-\h^{hg}_{m,n}-\h^{gh}_{m,n}\mbox{ \ for \ } (m,n)\neq (0,0),(1,0).\label{equa-hefgh}
\end{eqnarray}
Applying $\dir_{\bz}$ to $[\EE_{1,0},\FF_{\bz}]=\GG_{1,0}-\HH_{1,0}$ and using Lemma \ref{lemm-L0}, there is no wonder that $\dir_{\bz}(\GG_{1,0})=\dir_{\bz}(\HH_{1,0})-\FF_{\bz}\cdot\dir_{\bz}(\EE_{1,0})$. Then by \eqref{equa-E10}, one can write $\dir_{\bz}(\GG_{1,0})$ as
\begin{eqnarray*}
\!\!\!\!\!\!\!\!&&\!\!\!\!\!\!\!\!\mbox{$\sum\limits_{{(m,n)\in\BZ^*\setminus(1,0)}}$}
\!\!\big((\h^{hh}_{m,n}\!\!-\h^{hg}_{m,n}
\!\!-\h^{ef}_{m,n}\!\!-\h^{gh}_{m,n})\EE_{m,n}\!\otimes\!\FF\!_{1-m,-n}\!\!\!+\!(\h^{hh}_{m,n}\!\!-\h^{hg}_{m,n}
\!\!-\h^{fe}_{m,n}\!\!-\h^{gh}_{m,n})\FF\!_{m,n}\!\otimes\!\EE_{1-m,-n}\\[-7pt]
\!\!\!\!&&+(\h^{ef}_{m,n}\!+\h^{fe}_{m,n}
\!+\h^{hg}_{m,n}\!+\h^{gh}_{m,n})\GG_{m,n}\!\otimes\!\GG_{1-m,-n}\!+\
(\h^{hh}_{m,n}\!-\h^{gh}_{m,n}\!-\h^{ef}_{m,n}\!-\h^{fe}_{m,n})\GG_{m,n}\!\otimes\!\HH_{1-m,-n}\\
&&+(\h^{hh}_{m,n}
\!-\h^{hg}_{m,n}\!-\h^{ef}_{m,n}\!-\h^{fe}_{m,n})\HH_{m,n}\!\otimes\!\GG_{1-m,-n}\!+\!(\h^{ef}_{m,n}
\!+\h^{fe}_{m,n}\!+\h^{gh}_{m,n}\!+\h^{hg}_{m,n})\HH_{m,n}\!\otimes\!\HH_{1-m,-n}\big)\\
&&-(2\h^d_h\!+\h^{ef}_{\bz})\EE_{\bz}\!\otimes\!\FF_{1,0}-(2\h^h_d\!+\h^{ef}_{1,0})\EE_{1,0}\!\otimes\!\FF_{\bz}
\!-\!(2\h^d_h\!+\h^{fe}_{\bz})\FF_{\bz}\!\otimes\!\EE_{1,0}\!-\!(2\h^h_d\!+\h^{ef}_{1,0})\FF_{1,0}\!\otimes\!\EE_{\bz}\\
&&+(\h^{ef}_{\bz}\!+\h^{fe}_{\bz}-2\h^d_h)\DD\otimes\HH_{1,0}
\!+\!(\h^{fe}_{1,0}+\h^{ef}_{1,0}\!-2\h^h_d)\HH_{1,0}\otimes\DD
+(\h^d_h-\h^{ef}_{\bz}-\h^{fe}_{\bz})\DD\otimes\GG_{1,0}\\
&&+(\h^h_d-\h^{ef}_{1,0}-\h^{fe}_{1,0})\GG_{1,0}\otimes\DD
+\h^h_{d_1}\GG_{1,0}\otimes\DD_1.
\end{eqnarray*}
By \eqref{equa-hefd} and \eqref{equa-hefgh}, we see that $\dir_{\bz}(\GG_{1,0})+\h^d_h\DD\otimes\GG_{1,0}+\h^h_d\GG_{1,0}\otimes\DD
-\h^h_{d_1}\GG_{1,0}\otimes\DD_1$ is equal to
\begin{eqnarray}
&\!\!&\!\!\!\!\!\!\!\!\!\!\!
\mbox{$\sum\limits_{(m,n)\in\BZ}$}\!\!(\h^{fe}_{m,n}\EE_{m,n}\otimes\FF_{1-m,-n}
+\h^{ef}_{m,n}\FF_{m,n}\otimes\EE_{1-m,-n})\nonumber\\[-8pt]
&&\!\!\!\!\!\!\!\!\!\!\!
+\!\!\!\!\!\mbox{$\sum\limits_{{(m,n)\in\BZ^*\setminus(1,0)}}$}\!\!(\h^{hh}_{m,n}\GG_{m,n}\!\otimes\!\GG_{1-m,-n}\!\!
+\h^{hg}_{m,n}\GG_{m,n}\!\otimes\!\HH_{1-m,-n}\!\!
+\h^{gh}_{m,n}\HH_{m,n}\!\otimes\!\GG_{1-m,-n}).\label{equa-G10}
\end{eqnarray}
Applying $\dir_{\bz}$ to $[\GG_{1,0},\HH_{1,0}]=0$, by \eqref{equa-h10} and \eqref{equa-G10}, we have
\begin{eqnarray}
&&\h^{d_1}_h=0,\ \ \ \ \ \h^{hg}_{m,n}=-q^n\h^{gh}_{m-1,n},\label{equa-hhg}\\
&&q^n\h^{fe}_{m-1,n}-\h^{fe}_{m,n}=q^{-n}\h^{ef}_{m,n}-\h^{ef}_{m-1,n}.\label{equa-hfe-hef}
\end{eqnarray}

\label{page12}We now consider the second equation of \eqref{equa-ghg}. We observe an important fact that,
as stated in the introduction, in order to be able to determine $\dir_0$ (or in order to simplify some systems of equations such as \eqref{equa-ghg}), some extra inner derivations must be subtracted from $\dir_0$. We shall do this in two steps (cf.~\eqref{inner-u} and \eqref{inner-v}). Firstly, we take
\begin{equation}\label{inner-u}
\mbox{$u=\sum\limits_{m\neq 0}\sum\limits_{n}$}\frac{\g^{gh}_{m,n+1}}{q^m-1}(\GG_{m,n}+\HH_{m,n})\otimes(\GG_{-m,-n}+\HH_{-m,-n}).\end{equation}
Then we have by using  \eqref{equa-ghg} and relations \eqref{re-ss},
\begin{eqnarray*}
\GG_{0,1}\cdot u\!\!\!&=&\!\!\!
\mbox{$\sum\limits_{m\neq0}\sum\limits_{n}$}\frac{\g^{gh}_{m,n+1}}{q^m-1}
\big((q^m-1)\GG_{m,1+n}\otimes(\GG_{-m,-n}+\HH_{-m,-n})
\\[-10pt]
&&\ \ \ \ \ \ \ \ +(\GG_{m,n}+\HH_{m,n})\otimes(q^{-m}-1)\GG_{-m,1-n}\big)\\
&=&\mbox{$\sum\limits_{m\neq0}\sum\limits_{n}$}(\g^{gh}_{m,n+1}\GG_{m,1+n}\otimes\HH_{-m,-n}
-q^m\g^{gh}_{m,n+1}\HH_{m,n}\otimes\GG_{-m,1-n}\\[-10pt]
&&\ \ \ +\g^{gh}_{m,n+1}\GG_{m,1+n}\otimes\GG_{-m,-n}
-q^{-m}\g^{gh}_{m,n+1}\GG_{m,n+1}\otimes\GG_{-m,1-n})\\
&=&\mbox{$\sum\limits_{m\neq0}\sum\limits_{n}$}(\g^{gh}_{m,n+1}\GG_{m,1+n}\otimes\HH_{-m,-n}
+\g^{hg}_{m,n}\HH_{m,n}\otimes\GG_{-m,1-n}\\[-10pt]
&&\ \ \ +\g^{gh}_{m,n+1}\GG_{m,1+n}\otimes\GG_{-m,-n}
+q^{-m}\g^{gh}_{m,n+1}\GG_{m,n+1}\otimes\GG_{-m,1-n}).
\end{eqnarray*}
Thus, by
replacing $\dir_\bz$ by $\dir_\bz-u_{\rm inn},$ and by \eqref{equa-g01}, we obtain $\g^{hg}_{m,n}=\g^{gh}_{m,n}=0$ for $m\in\Z^*,$ $n\in\Z$. By \eqref{equa-ggh-hhg}, $\g^{gh}_{0,n}=\g^{hg}_{0,n}=0$ for $n\neq 0,1$. Hence, $\g^{gh}_{m,n}=\g^{hg}_{m,n}=0$ for $(m,n)\neq(0,0),(0,1)$. By \eqref{equa-ggh-hhg}, $\h^{gh}_{m,n}=0$ for $m\in\Z^*$, $n\in\Z$, and $\h^{hg}_{m,n}=0$ for $m,n\in\Z$ with $m\ne1$.

Secondly, introduce
\begin{equation}\label{inner-v}
v=\mbox{$\sum\limits_{n\neq0}$}\frac{\h^{gh}_{0,n}}{1-q^n}(\GG_{0,n}+\HH_{0,n})\otimes
(\GG_{0,-n}+\HH_{0,-n}).\end{equation} Using \eqref{equa-hhg} and relations \eqref{re-ss}, we have
\begin{eqnarray}\label{SMD--}
\GG_{1,0}\cdot v\!\!\!&=&\!\!\!0,\\
\HH_{1,0}\cdot v\!\!\!&=&\!\!\!
\mbox{$\sum\limits_{n\neq0}$}\frac{\h^{gh}_{0,n}}{1-q^n}\big((1-q^n)\HH_{1,n}\otimes(\GG_{0,-n}+\HH_{0,-n})
+(\GG_{0,n}+\HH_{0,n})\otimes(1-q^n)\HH_{1,-n}\big)\nonumber\\[-6pt]
\!\!\!&=&\!\!\!\mbox{$\sum\limits_{n\neq0}$}(\h^{hg}_{1,n}\HH_{1,n}\!\otimes\!\GG_{0,-n}\!\!-\!q^{-n}\h^{hg}_{1,n}\GG_{0,n}\!\otimes\!\HH_{1,-n}
\!\!+\h^{hg}_{1,n}\HH_{1,n}\!\otimes\!\HH_{0,-n}\!\!+\!q^{-n}\h^{hg}_{1,n}\HH_{0,n}\!\otimes\!\HH_{1,-n})\nonumber\\[-6pt]
\!\!\!&=&\!\!\!\mbox{$\sum\limits_{n\neq0}$}(\h^{hg}_{1,n}\HH_{1,n}\otimes\GG_{0,-n}+\h^{gh}_{0,n}\GG_{0,n}\otimes\HH_{1,-n}
+\h^{hh}_{1,n}\HH_{1,n}\otimes\HH_{0,-n}+\h^{hh}_{0,n}\HH_{0,n}\otimes\HH_{1,-n}).\nonumber
\end{eqnarray}
Thus, if we replace $\dir_\bz$ further by $\dir_\bz-v_{\rm inn}$ (note from \eqref{SMD--} that the replacement does not affect $\dir_0(\GG_{1,0})$), we have $\h^{hg}_{1,n}=\h^{gh}_{0,n}=0$ for $n\in\Z^*$. Thus $\g^{gh}_{m,n}=\g^{hg}_{m,n}=\h^{gh}_{m,n}=\h^{hg}_{m,n}=0$ for $(m,n)\neq(0,0),(1,0)$.

Another crucial result we observed is the following.
\begin{clai}\label{claim2}We have $\g^{gg}_{m,n}= \h^{hh}_{m,n}=0$ for $(m,n)\neq(0,0),(1,0)$.
\end{clai}

To prove the claim, first we consider elements $\g^{ef}_{m',n}$ and set $\mathcal {S}_{m'}=\{n\,|\,\g^{ef}_{m',n}\neq0\}$ for  $m'\in\Z$.
From \eqref{equa-gdgh} and \eqref{equa-gefgh}, we can see that the claim holds if $\mathcal {S}_{m'}=\emptyset$ for all $m'\in\Z$. Thus
assume $\mathcal {S}_{m'}\ne\emptyset$ for some fixed $m'\in\Z$ and let $n_0={\rm min}\,\mathcal {S}_{m'}$. For any $n\in\Z$, if $n<n_0$, then \eqref{equa-gfe-gef} gives $q^{m'}\g^{fe}_{m',n-1}=\g^{fe}_{m',n}$, which implies $\g^{fe}_{m',n}=0$ due to the fact that there are only finite many nonzero $\g^{fe}_{m,n}$'s. If $n=n_0$, then $\g^{fe}_{m',n_0}=-q^{-m'}\g^{ef}_{m',n_0}$. By induction on $n$, we have $q^{m'}\g^{fe}_{m',n}=-\g^{ef}_{m',n}$ for $n\in\Z$ and
\begin{eqnarray}
&&q^m\g^{fe}_{m,n}=-\g^{ef}_{m,n}\ \ \ \ \ \ \ {\rm for \ }\ (m,n)\in\BZ,\nonumber\\
&&\g^{gg}_{m,n}=(q^m-1)\g^{ef}_{m,n}\ \ {\rm for \ }\  (m,n)\neq(0,0),(0,1).\label{equa-ggg-ef}
\end{eqnarray}
Using \eqref{equa-hhg} and \eqref{equa-hfe-hef}, the following can be obtained by the similar methods:
\begin{eqnarray}
&&\h^{ef}_{m,n}=-q^n\h^{fe}_{m,n}\ \ \ \ \ \ \ \,{\rm for\ all}\ (m,n)\in\BZ,\nonumber\\
&&\h^{hh}_{m,n}=(1-q^n)\h^{fe}_{m,n}\ \ {\rm for}\ (m,n)\neq(0,0),(1,0).\label{equa-hhh-fe}
\end{eqnarray}
Then \eqref{equa-E01} can be rewritten as
\begin{eqnarray}
\dir_{\bz}(\EE_{0,1})=\!\!\!\!\!\!\!\!&&\!\!
\mbox{$\sum\limits_{{(m,n)\in\BZ\setminus(0,1)}}$}\!(\g^{fe}_{m,n}\EE_{m,n}\otimes\GG_{-m,1-n}
+\g^{ef}_{m,n}\EE_{m,n}\otimes\HH_{-m,1-n})\nonumber\\[-6pt]
&+\!\!&
\mbox{$\sum\limits_{(m,n)\in\BZ^*}$}(\g^{ef}_{m,n}\GG_{m,n}\otimes\EE_{-m,1-n}
+\g^{fe}_{m,n}\HH_{m,n}\otimes\EE_{-m,1-n})\nonumber\\[-6pt]
&+\!\!&\!\!(\g^g_d-\g^{ef}_{0,1})\EE_{0,1}\otimes\DD
+(\g^d_g-\g^{fe}_{\bz})\DD\otimes\EE_{0,1}.\label{equa-e01}
\end{eqnarray}
The identity $[\dir_{\bz}(\FF_{0,0}),\dir_{\bz}(\GG_{0,1})]=\dir_{\bz}(\FF_{0,1})$ gives
\begin{eqnarray}
\dir_{\bz}(\FF_{0,1})=\!\!\!&&\!\!\mbox{$\sum\limits_{{(m,n)\in\BZ\setminus(0,1)}}$}
\!(\g^{ef}_{m,n}\FF_{m,n}\otimes\GG_{-m,1-n}
+\g^{fe}_{m,n}\FF_{m,n}\otimes\HH_{-m,1-n})\nonumber\\[-6pt]
&+\!\!&\mbox{$\sum\limits_{(m,n)\in\BZ^*}$}\!(\g^{fe}_{m,n}\GG_{m,n}\otimes\FF_{-m,1-n}
+\g^{ef}_{m,n}\HH_{m,n}\otimes\FF_{-m,1-n})\nonumber\\[-6pt]
&+\!\!&\!\!(\g^g_d-\g^{fe}_{0,1})\FF_{0,1}\otimes\DD
+(\g^d_g-\g^{ef}_{\bz})\DD\otimes\FF_{0,1}.\label{equa-F01}
\end{eqnarray}
Applying $\dir_{\bz}$ to $[\GG_{0,1},[\EE_{0,1},\FF_{0,1}]]=0$, one sees that $$\GG_{0,1}\cdot\big(\EE_{0,1}\cdot\dir_{\bz}(\FF_{0,1})-\FF_{0,1}\cdot\dir_{\bz}(\EE_{0,1})\big)
=(\GG_{0,2}-\HH_{0,2})\cdot\dir_{\bz}(\GG_{0,1}).$$
By \eqref{equa-g01}, \eqref{equa-e01} and \eqref{equa-F01}, and by comparing the coefficients of $\GG_{m,n}\otimes\GG_{-m,3-n}$, we obtain
\begin{eqnarray}
q^{-2m}\g^{gg}_{m,n}=q^m\g^{gg}_{m,n-2}+(q^{-m}-1)\g^{gg}_{m,n-1}.\label{equa-ggg}
\end{eqnarray}
For any fixed $m'$, the set $\mathcal {S}^g_{m'}=\{n\,|\,\g^{gg}_{m',n}\neq0\}$ is finite. Assume $\mathcal {S}^g_{m'}\neq\emptyset$ and set $n^g_0={\rm min\,}\mathcal {S}^g_{m'}$. Using \eqref{equa-ggg} and arguing as above, one sees that $\g^{gg}_{m,n}=0$ for $(m,n)\neq(0,0),(0,1)$.

Similarly, \eqref{equa-E10} can be rewritten as
\begin{eqnarray}
\dir_{\bz}(\EE_{1,0})=\!\!\!\!\!\!\!\!&&\mbox{$\sum\limits_{(m,n)\in\BZ\setminus(1,0)}$}
\!(\h^{fe}_{m,n}\EE_{m,n}\otimes\HH_{1-m,-n}
+\h^{ef}_{m,n}\EE_{m,n}\otimes\GG_{1-m,-n})\nonumber\\[-6pt]
&+\!\!&\mbox{$\sum\limits_{(m,n)\in\BZ^*}$}\!
(\h^{ef}_{m,n}\HH_{m,n}\otimes\EE_{1-m,-n}
+\h^{fe}_{m,n}\GG_{m,n}\otimes\EE_{1-m,-n})\nonumber\\[-6pt]
&+\!\!&\!\!(\h^h_d-\h^{ef}_{1,0})\EE_{1,0}\otimes\DD
+(\h^d_h-\h^{fe}_{\bz})\DD\otimes\EE_{1,0}.\label{equa-e10}
\end{eqnarray}
We can write $\dir_{\bz}(\FF_{1,0})$ as follows due to the identity $[\dir_{\bz}(\HH_{1,0}),\dir_{\bz}(\FF_{0,0})]=\dir_{\bz}(\FF_{1,0})$:
\begin{eqnarray}
\dir_{\bz}(\FF_{1,0})=\!\!\!&&\mbox{$\sum\limits_{(m,n)\in\BZ\setminus(1,0)}$}\!
(\h^{ef}_{m,n}\FF_{m,n}\otimes\HH_{1-m,-n}
+\h^{fe}_{m,n}\FF_{m,n}\otimes\GG_{1-m,-n})\nonumber\\[-6pt]
&+\!\!&\mbox{$\sum\limits_{(m,n)\in\BZ^*}$}\!(\h^{fe}_{m,n}\HH_{m,n}\otimes\FF_{1-m,-n}
+\h^{ef}_{m,n}\GG_{m,n}\otimes\FF_{1-m,-n})\nonumber\\[-6pt]
&+\!\!&\!\!(\h^h_d-\h^{fe}_{1,0})\FF_{1,0}\otimes\DD
+(\h^d_h-\h^{ef}_{\bz})\DD\otimes\FF_{0,1}.\label{equa-F10}
\end{eqnarray}
Applying $\dir_{\bz}$ to $[\HH_{1,0},[\EE_{1,0},\FF_{1,0}]]=0$, we have $$\HH_{1,0}\cdot(\EE_{1,0}\cdot\dir_{\bz}(\FF_{1,0})-\FF_{1,0}\cdot\dir_{\bz}(\EE_{1,0}))
=(\GG_{2,0}-\HH_{2,0})\cdot\dir_{\bz}(\HH_{1,0}).$$
Combining \eqref{equa-h10}, \eqref{equa-e10} and \eqref{equa-F10}, and comparing the coefficients of $\HH_{m,n}\otimes\HH_{3-m,-n}$, one can deduce that
\begin{eqnarray}\label{equa-ggg-h}
q^{-2n}\h^{hh}_{m,n}=q^n\h^{hh}_{m-2,n}+(q^{-n}-1)\h^{hh}_{m-1,n}.
\end{eqnarray}
For any fixed $n'$, as before, the set $\mathcal {S}^h_{n'}=\{m\,|\,\h^{hh}_{m,n'}\neq0\}$ is finite.
Assume $\mathcal {S}^h_{n'}\neq\emptyset$ and set $m_0^h={\rm min\,}\mathcal {S}^h_{n'}$. Using \eqref{equa-ggg-h} and arguing as above, we can obtain $\h^{hh}_{m,n}=0$ for $(m,n)\neq(0,0),(1,0)$.
This proves Claim \ref{claim2}.\vskip6pt

Take $m=0$ and  $n=0$ respectively in the first equation of \eqref{equa-gh-ef}, then
\begin{eqnarray*}
&&\h^{ef}_{0,n-1}-\h^{ef}_{0,n}=\g^{ef}_{0,n}-q^n\g^{ef}_{-1,n}=\g^{ef}_{0,n},\\
&&q^m\h^{ef}_{m,-1}-\h^{ef}_{m,0}=-\g^{ef}_{m-1,0}+\g^{ef}_{m,0},
\end{eqnarray*}
which together with \eqref{equa-ggg-ef} and \eqref{equa-hhh-fe} imply $\g^{ef}_{m,n}=\g^{fe}_{m,n}=0$ for $(m,n)\neq(0,0),(0,1)$ and $\h^{ef}_{m,n}=\h^{fe}_{m,n}=0$ for $(m,n)\neq(0,0),(1,0)$. Furthermore, we also obtain $\h^{ef}_{\bz}=-\g^{ef}_{\bz}=\g^{ef}_{0,1}=-\h^{ef}_{1,0}=-\h^{fe}_{\bz}
=\h^{fe}_{1,0}=\g^{fe}_{\bz}=-\g^{fe}_{0,1}$. Then \eqref{equa-hefd} and \eqref{equa-gdgh} imply $\g^d_g=\g^g_d=\h^h_d=\h^d_h=0$.

By now, \eqref{equa-g01} and \eqref{equa-h10} can be rewritten as
\begin{eqnarray*}
\dir_{\bz}(\GG_{0,1})\!\!\!&=&\!\!\g^{ef}_{\bz}(\EE_{\bz}\otimes\FF_{0,1}-\EE_{0,1}\otimes\FF_{\bz}
-\FF_{\bz}\otimes\EE_{0,1}+\FF_{0,1}\otimes\EE_{\bz}),\\
\dir_{\bz}(\HH_{1,0})\!\!\!&=&\!\!\g^{ef}_{\bz}(-\EE_{\bz}\otimes\FF_{1,0}+\EE_{1,0}\otimes\FF_{\bz}
+\FF_{\bz}\otimes\EE_{1,0}-\FF_{1,0}\otimes\EE_{\bz}).
\end{eqnarray*}
Replacing $\dir_\bz$ by $\dir_\bz-u_{\rm inn},$ where $u=-\g^{ef}_{\bz}\EE_{\bz}\otimes\FF_{\bz}+\FF_{\bz}\otimes\EE_{\bz}+\frac{1}{2}\DD\otimes\DD$,
we also have $\g^{e,f}_{\bz}=0$ by using
\begin{eqnarray*}
\GG_{0,1}\cdot u\!\!\!&=&\!\!\!\EE_{0,1}\otimes\FF_{\bz}-\EE_{\bz}\otimes\FF_{0,1}
-\FF_{0,1}\otimes\EE_{\bz}+\FF_{\bz}\otimes\EE_{0,1},\\
\HH_{1,0}\cdot u\!\!\!&=&\!\!\!-\EE_{1,0}\otimes\FF_{\bz}+\EE_{\bz}\otimes\FF_{1,0}
+\FF_{1,0}\otimes\EE_{\bz}-\FF_{\bz}\otimes\EE_{1,0}.
\end{eqnarray*}
 Thus $\dir_{\bz}({\GG_{0,1}})=\dir_{\bz}(\HH_{1,0})=0$. By the similar method, one can deduce that $\dir_{\bz}({\GG_{0,n}})=\dir_{\bz}(\HH_{n,0})=0$ for $n\in\Z^+$.
Now using Lemma \ref{lemm-L0} and the fact that $\WL_{m,n}$ for $m,n\in\Z^+$ can be generated by the set $\{\DD,\,\EE_{0,0},\,\FF_{0,0},\,\DD_{1},\,\DD_{2},\,\GG_{0,n},\,\HH_{n,0}\}$,  we complete the proof of Lemma \ref{lemm-L+}.
\end{proof}

\begin{lemm}\label{lemm-L-}
Replacing $\dir_\bz$ by $\dir_\bz-u_{\rm inn}$ for some $u\in
\VV_\bz$, one can suppose $\dir_\bz(\WL_{m,n})=0$ for $m,n\in\Z^-$.
\end{lemm}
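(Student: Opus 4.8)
The plan is to mirror the strategy of Lemma \ref{lemm-L+} almost verbatim, exploiting the symmetry of $\WL$ under the involution that swaps $\EE\leftrightarrow\FF$, $\GG\leftrightarrow\HH$, $\DD\mapsto-\DD$, and $\bm\mapsto-\bm$ (together with an appropriate sign adjustment on $d_1,d_2$). Under this symmetry the negative root spaces $\WL_{m,n}$ with $m,n\in\Z^-$ play the role that the positive ones played before, and $\FF_{0,-1}$, $\HH_{-1,0}$ take over the roles of $\GG_{0,1}$, $\HH_{1,0}$. First I would observe, exactly as at the start of Lemma \ref{lemm-L+}, that $[\DD,\HH_{0,-1}]=0$ forces $\DD\cdot\dir_\bz(\HH_{0,-1})=0$, so $\dir_\bz(\HH_{0,-1})$ lies in the zero-$\DD$-weight part of $\VV_{0,-1}$; write it out with undetermined coefficients in $\C[q^{\pm1}]$ and do the same for $\dir_\bz(\GG_{-1,0})$.

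Next I would run the same sequence of reductions: subtract off the obvious inner derivations coming from the analogues of \eqref{Inn-0} and the list following it to kill the $\DD_i$-tensor terms and the mixed $\EE\otimes\GG$-type terms; then apply $\dir_\bz$ to the brackets $[\GG_{-1,0},\HH_{0,-1}]=0$, $[\FF_{\bz},\HH_{0,-1}]=-\FF_{0,-1}$, $[\FF_{0,-1},\EE_{\bz}]=\GG_{0,-1}-\HH_{0,-1}$, $[\GG_{0,-1},\HH_{0,-1}]=0$ and the corresponding ones with the $(-1,0)$-elements, to obtain systems of recursions on the coefficients $\g^{hh},\h^{gg},\g^{gg},\h^{hh},\g^{gh},\g^{hg},\h^{gh},\h^{hg},\g^{ef},\g^{fe},\h^{ef},\h^{fe}$ formally identical (after the substitution $\bm\mapsto-\bm$, $q\mapsto q$) to \eqref{equa-ggh-hhg}--\eqref{equa-gh-ef}, \eqref{equa-ghg}--\eqref{equa-gfe-gef}, \eqref{equa-hhg}--\eqref{equa-hfe-hef}. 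The vanishing statements then follow by the verbatim analogues of Claims \ref{claim1} and \ref{claim2}: a minimality (or maximality) argument on the finite support sets, using that only finitely many coefficients are nonzero. The two-step inner-derivation trick \eqref{inner-u}--\eqref{inner-v} gets re-used with indices negated to force $\g^{gh}=\g^{hg}=\h^{gh}=\h^{hg}=0$ off the exceptional degrees, and a final $\DD\otimes\DD$-type inner derivation kills the last scalar $\g^{ef}_\bz$. This yields $\dir_\bz(\GG_{0,-n})=\dir_\bz(\HH_{-n,0})=0$ for all $n\in\Z^+$, and since $\WL_{m,n}$ for $m,n\in\Z^-$ is generated by $\{\DD,\EE_{0,0},\FF_{0,0},\DD_1,\DD_2,\GG_{0,-n},\HH_{-n,0}\}$ together with Lemma \ref{lemm-L0}, we conclude $\dir_\bz(\WL_{m,n})=0$ for $m,n\in\Z^-$.

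The one genuinely new point to check — and the place where I expect the bookkeeping to require care rather than the kind of soft symmetry argument above — is that the inner derivations subtracted here must live in $\VV_\bz$ and must not disturb the normalizations already achieved in Lemmas \ref{lemm-L0} and \ref{lemm-L+}; in particular one must verify (as was done in \eqref{SMD--}) that each newly subtracted $u_{\rm inn}$ annihilates, or at least does not reintroduce nonzero values on, the elements $\GG_{0,n}$, $\HH_{n,0}$ with $n\in\Z^+$. This compatibility is automatic because the relevant inner derivations are built from $\GG_{0,-n}\otimes(\cdots)$-type tensors supported in strictly negative degrees, whose diagonal action on a positive-degree generator lands in a degree that the already-vanishing components rule out; but it should be stated explicitly. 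Apart from this, the proof is a direct transcription, so I would write it compactly: set up the symmetry, remark that it interchanges the two halves of the root lattice and carries the relations \eqref{re-ss} to themselves up to the stated sign changes, invoke it to transport every computation in Lemma \ref{lemm-L+}, and note the generation statement for $\WL_{m,n}$ with $m,n\in\Z^-$ to finish.
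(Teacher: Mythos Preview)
Your plan to replay the entire machinery of Lemma~\ref{lemm-L+} under an involution is not what the paper does, and the one step you flag as ``the genuinely new point to check'' is precisely where your argument breaks. The inner derivations you would need to subtract in the mirrored argument---the analogues of the $\DD_i\otimes\DD_j$ corrections, of \eqref{inner-u}, of \eqref{inner-v}, and of the final $\EE_\bz\otimes\FF_\bz+\cdots$ correction---all lie in $\VV_\bz$, not in any negatively-graded piece. Consequently their diagonal action on $\GG_{0,1}$ or $\HH_{1,0}$ lands in $\VV_{0,1}$ or $\VV_{1,0}$ and is typically nonzero (for instance $\GG_{0,1}\cdot(\DD_2\otimes\DD_2)=-\DD_2\otimes\GG_{0,1}-\GG_{0,1}\otimes\DD_2$), so subtracting them would undo the normalization $\dir_\bz(\GG_{0,1})=0$ achieved in Lemma~\ref{lemm-L+}. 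Your justification that these tensors are ``supported in strictly negative degrees, whose diagonal action on a positive-degree generator lands in a degree that the already-vanishing components rule out'' is simply false; nothing rules those components out. The involution argument would at best produce a \emph{second} inner correction $u'$ with $(\dir_\bz-u'_{\rm inn})|_{\WL_{m,n}}=0$ for $m,n\le0$, but not show that $u'$ can be taken equal to the $u$ already used for $m,n\ge0$.

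The paper sidesteps this entirely by exploiting Lemma~\ref{lemm-L+} \emph{as input} rather than rerunning it. Since $\dir_\bz$ already vanishes on $\GG_{0,1}$, $\GG_{1,1}$, $\HH_{1,1}$, $\GG_{1,0}$, applying $\dir_\bz$ to the relations
\[
[\GG_{0,1},\GG_{0,-1}]=0,\qquad [\GG_{1,1},\GG_{0,-1}]=(1-q^{-1})\GG_{1,0},\qquad [\HH_{1,1},\GG_{0,-1}]=0
\]
gives $\GG_{0,1}\cdot\dir_\bz(\GG_{0,-1})=0$, $\GG_{1,1}\cdot\dir_\bz(\GG_{0,-1})=0$, $\HH_{1,1}\cdot\dir_\bz(\GG_{0,-1})=0$. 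These conditions, together with a short finiteness argument on the $\HH\otimes\HH$ coefficients, force $\dir_\bz(\GG_{0,-1})=0$ outright---no further inner derivation is subtracted at all. The same works for $\HH_{-1,0}$ and then for all $\GG_{0,-n}$, $\HH_{-n,0}$, after which the generation statement you quote finishes the proof. This is both much shorter than your plan and free of the compatibility obstruction.
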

\begin{proof}
Since $\DD\cdot\dir_\bz(\GG_{0,-1})=0$, we can write $\dir_\bz(\GG_{0,-1})$ as (cf.~statement after \eqref{first-})
\begin{eqnarray}\label{equa-G0-1}
\!\!\!\!\!\!\!\!\!\!\!\!\!\!\!&&\!\!\!\!\!\!\!\!\!\!\!\!\mbox{$\sum\limits_{(m,n)\in\BZ}$}\!\big(\G^{ef}_{m,n}\EE_{m,n}
\otimes\FF_{-m,-1-n}+\G^{fe}_{m,n}\FF_{m,n}\otimes\EE_{-m,-1-n}
+\G^{gg}_{m,n}\GG_{m,n}\otimes\GG_{-m,-1-n}\nonumber\\[-12pt]
\!\!\!\!\!\!\!\!\!\!\!\!\!\!\!&&\ \ \ +\G^{gh}_{m,n}\GG_{m,n}\otimes\HH_{-m,-1-n}
+\G^{hg}_{m,n}\HH_{m,n}\otimes\GG_{-m,-1-n}
+\G^{hh}_{m,n}\HH_{m,n}\otimes\HH_{-m,-1-n}\big)\nonumber\\
\!\!\!\!\!\!\!\!\!\!\!\!\!\!\!&+&\!\!\!\!\!\G^g_d\GG_{0,-1}\!\otimes\!\DD+\G^d_g\DD\!\otimes\!\GG_{0,-1}
\!+\!\G^g_{d_{1}}\GG_{0,-1}\!\otimes\!\DD_{1}\!+\!\G^{d_{1}}_g\DD_{1}\!\otimes\!\GG_{0,-1}
\!+\!\G^g_{d_{2}}\GG_{0,-1}\!\otimes\!\DD_{2}\!+\!\G^{d_{2}}_g\DD_{2}\!\otimes\!\GG_{0,-1}
\nonumber\\
\!\!\!\!\!\!\!\!\!\!\!\!\!\!\!&+&\!\!\!\!\!\G^h_d\HH_{0,-1}\!\otimes\!\DD\!\!+\!\G^d_h\DD\!\otimes\!\HH_{0,-1}
\!\!+\!\G^h_{d_{1}}\HH_{0,-1}\!\otimes\!\DD_{1}\!\!+\!\G^{d_{1}}_h\DD_{1}\!\otimes\!\HH_{0,-1}
\!\!+\!\G^h_{d_{2}}\HH_{0,-1}\!\otimes\!\DD_{2}\!\!+\!\G^{d_{2}}_h\DD_{2}\!\otimes\!\HH_{0,-1}.
\end{eqnarray}
 Applying $\dir_{\bz}$ to $[\GG_{0,1},\GG_{0,-1}]=0$, by Lemma \ref{lemm-L+}, we obtain
\begin{eqnarray*}
&&\!\!\!\!\!\!\!\!\G^g_{d_2}=\G^h_{d_2}=\G^{d_2}_g=\G^{d_2}_h
=q^m\G^{ef}_{m,n-1}-\G^{ef}_{m,n}=q^{-m}\G^{fe}_{m,n}-\G^{fe}_{m,n-1}=0,\\
&&\!\!\!\!\!\!\!\!(q^m-1)\G^{gg}_{m,n-1}\!+\!(q^{-m}-1)\G^{gg}_{m,n}
\!=\!(q^m-1)\G^{gh}_{m,n-1}\!=\!(q^{-m}-1)\G^{hg}_{m,n}=0.
\end{eqnarray*}
Then we can deduce $\G^{ef}_{m,n}=\G^{fe}_{m,n}=0$ for $m,n\in\Z$ and $\G^{gg}_{m,n}=\G^{gh}_{m,n}=\G^{hg}_{m,n}=0$ for $m\neq0$.

Applying $\dir_{\bz}$ to $[\GG_{1,1},\GG_{0,-1}]=(1-q^{-1})\GG_{1,0}$ and using Lemma \ref{lemm-L+}, we get the identities
\begin{eqnarray*}
&&(1-q^{n-1})\G^{gg}_{0,n-1}=(1-q^n)\G^{g,h}_{0,n}= (1-q^{-n-1})\G^{hg}_{0,n}=0,\\
&&(1-q^{-n-1})\G^{gg}_{0,n}=\G^g_d=\G^g_{d_1}=\G^h_{d_1}=\G^d_g=\G^{d_1}_g=\G^{d_1}_h=0,
\end{eqnarray*}
which force $\G^{gg}_{0,n}=\G^{gh}_{0,n}=\G^{hg}_{0,n}=0$ for $n\neq0,-1$. Then we can rewrite $\dir_{\bz}(\GG_{0,-1})$ as
\begin{eqnarray}
\dir_{\bz}(\GG_{0,-1})=\!\!\!\!\mbox{$\sum\limits_{(m,n)\in\BZ^*\setminus(0,-1)}$}\!
\G^{hh}_{m,n}\HH_{m,n}\otimes\HH_{-m,-1-n}
+\G^h_d\HH_{0,-1}\otimes\DD+\G^d_h\DD\otimes\HH_{0,-1}.\label{equa-g-1}
\end{eqnarray}
Applying $\dir_{\bz}$ to $[\HH_{1,1},\GG_{0,-1}]=0$ and using \eqref{equa-g-1}, we have
\begin{eqnarray}
&&(1-q^{-1})\G^h_d=(1-q^{-1})\G^d_h=0,\nonumber\\ &&(q^m-q^n)\G^{hh}_{m-1,n-1}+(q^{-m}-q^{-1-n})\G^{hh}_{m,n}=0.\label{equa-G-1}
\end{eqnarray}
Then $\G^h_d=\G^d_h=0$. Take $m=n$ and $m=1+n$ respectively in \eqref{equa-G-1}, one has $\G^{hh}_{m,m}=0$ and $\G^{hh}_{m,m-1}=0$ for $m\in\Z^*$.
If $m\neq n,\,1+n$, then $\G^{hh}_{m-1,n-1}=\frac{q^{-1-n}-q^{-m}}{q^m-q^n}\,\G^{hh}_{m,n}$, which forces $\G^{hh}_{m,n}=0$ because of the fact that there are only finite many nonzero $\G^{hh}_{m,n}$'s. Hence $\dir_{\bz}(\GG_{0,-1})=0$.
Using similar arguments, we also obtain $\dir_{\bz}(\HH_{-1,0})=0$. By the similar method, we also have $\dir_{\bz}({\GG_{0,-n}})=\dir_{\bz}(\HH_{-n,0})=0$ for $n\in\Z^+$. Using Lemma \ref{lemm-L0} and the fact that $\WL_{m,n}$ for $m,n\in\Z^-$ can be generated by the set $\{\DD,\,\EE_{0,0},\,\FF_{0,0},\,\DD_{1},\,\DD_{2},\,\GG_{0,-n},\,\HH_{-n,0}\}$, we obtain the lemma.
\end{proof}

\begin{lemm}\label{lemm-D}
For any $\dir\in{\rm Der}(\WL,\VV)$, \eqref{summable} is a finite sum.
\end{lemm}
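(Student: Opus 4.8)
The plan is to prove that $\dir=\sum_{\bm\in\BZ}\dir_\bm$ is actually a finite sum by showing that $\dir_\bm$ vanishes for all but finitely many $\bm$. By Lemma \ref{lemm-Dn} we already know each $\dir_\bk$ with $\bk\in\BZ^*$ is inner, say $\dir_\bk=v_\bk{}_{\rm inn}$ with $v_\bk\in\VV_\bk$; the summability statement \eqref{summable} tells us that for each fixed homogeneous element $x$, only finitely many $\dir_\bm(x)$ are nonzero. What we must upgrade this to is a \emph{uniform} finiteness: a single finite set $F\subset\BZ$ outside which $\dir_\bm\equiv 0$ on all of $\WL$.

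First I would note that it suffices to bound the degrees $\bm$ for which $\dir_\bm$ does not vanish on a fixed finite generating set of $\WL$ as a Lie algebra. Indeed, $\WL$ is generated (over $\C$) by finitely many homogeneous elements --- for instance one can take $\DD$, $\DD_1$, $\DD_2$, $\EE_{0,0}$, $\FF_{0,0}$ together with $\GG_{0,1}$, $\GG_{0,-1}$, $\HH_{1,0}$, $\HH_{-1,0}$ (a set of this kind already appears in the proofs of Lemmas \ref{lemm-L+} and \ref{lemm-L-}, modulo checking it generates all of $\WL$, which follows from the bracket relations \eqref{re-ss}). Since each $\dir_\bm$ is a derivation and $\dir_\bm([x,y])=x\cdot\dir_\bm(y)-y\cdot\dir_\bm(x)$, if $\dir_\bm$ kills every generator then it kills everything. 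Now apply the summability of \eqref{summable} to each of these finitely many generators: for each generator $x_i$ there is a finite set $F_i\subset\BZ$ with $\dir_\bm(x_i)=0$ for $\bm\notin F_i$. Taking $F=\bigcup_i F_i$, which is finite, we get $\dir_\bm=0$ for all $\bm\notin F$, hence $\dir=\sum_{\bm\in F}\dir_\bm$ is a finite sum.

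The one genuine point requiring care --- and what I expect to be the main obstacle --- is verifying that a \emph{finite} set of homogeneous elements actually generates $\WL$ as a Lie algebra. The subalgebra $\LL_{0,0}\cong\frak{sl}_2$ together with $\GG_{0,\pm1}$ and $\HH_{\pm1,0}$ should, via repeated brackets using $[\GG_{\bk},\GG_{\bk'}]=(q^{k_2k_1'}-q^{k_2'k_1})\GG_{\bk+\bk'}$ and $[\GG_{\bk},\EE_{\bm}]=q^{k_2m_1}\EE_{\bk+\bm}$ and their $\FF,\HH$ analogues, reach every $\GG_{\bk}$, $\HH_{\bk}$, $\EE_{\bm}$, $\FF_{\bm}$; here the hypothesis that $q$ is not a root of unity guarantees the structure constants $q^{k_2k_1'}-q^{k_2'k_1}$ do not all vanish, so the relevant brackets are nonzero and the grading lattice $\BZ$ is genuinely reached. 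One must organize this so that only finitely many generators are needed --- e.g.\ first produce $\GG_{1,0}$ and $\GG_{0,-1}$-type elements from $\GG_{0,1}$ via brackets with $\EE$'s and $\FF$'s (as is in fact done implicitly when passing from $\GG_{0,1}$ to $\EE_{0,1}$, $\FF_{0,1}$ in the proof of Lemma \ref{lemm-L+}), and then climb the lattice. Once this finite-generation statement is in hand, the rest of the argument is the short two-line reduction above.
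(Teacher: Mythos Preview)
Your argument is correct, but it takes a genuinely different route from the paper's own proof. The paper does not appeal to finite generation at all. Instead, it uses Lemma~\ref{lemm-Dn}: for $\bm\in\BZ^*$ each $\dir_\bm$ is inner, say $\dir_\bm=(v_\bm)_{\rm inn}$ with $v_\bm\in\VV_\bm$. Then one evaluates $\dir$ at a single torus element $\rho=\rho_1 d_1+\rho_2 d_2$, using that $\dir_\bm(\rho)=\rho\cdot v_\bm=\rho(\bm)\,v_\bm$. If the set $\Gamma'=\{\bm\in\BZ^*\mid v_\bm\ne0\}$ were infinite, then since each condition $\rho(\bm)=0$ cuts out a single line in the two-dimensional space $\texttt{T}$ and a countable union of lines cannot cover $\texttt{T}$, there exists $\rho$ with $\rho(\bm)\ne0$ for all $\bm\in\Gamma'$; then $\dir(\rho)=\sum_{\bm}\rho(\bm)v_\bm$ would have infinitely many nonzero homogeneous components, contradicting $\dir(\rho)\in\VV$.

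Your approach trades the use of Lemma~\ref{lemm-Dn} for a structural fact about $\WL$, namely finite generation as a Lie algebra. What you gain is independence from the machinery of inner derivations: your argument would apply verbatim to any $\BZ$-graded Lie algebra that is finitely generated by homogeneous elements, without knowing anything about the homogeneous components $\dir_\bm$ individually. The paper's argument, on the other hand, is shorter once Lemma~\ref{lemm-Dn} is in hand, and avoids the bookkeeping needed to verify your generating set---which, as you rightly flag, is the only nontrivial step in your proof (and it does go through using \eqref{re-ss} and the hypothesis that $q$ is not a root of unity, though a clean write-up requires a short induction over the lattice $\BZ$).
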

\begin{proof} Since $\dir=\mbox{$\sum_{\bk\in\BZ}\dir_\bk$},$ by the Lemmas \ref{lemm-L0}, \ref{lemm-L+} and \ref{lemm-L-}, one can suppose $\dir_{\bm}=(v_{\bm})_{\rm inn}$ for some $v_{\bm}\in\VV_{\bm}$ and $\bm\in\Z^2$. If $\Gamma'=\{\bm\in\BZ^*\,|\,v_{\bm}\neq0\}$ is an infinite set, by linear algebra, there exists $\rho\in\texttt{T}$
(cf.~notation in the proof of Lemma \ref{lemm-Dn}) such that $\rho(\bm)\neq0$ for all $\bm\in \Gamma'.$ Then $\dir(\rho)=\sum_{\bm\in\Gamma'}\rho(\bm)v_{\bm}$ is an infinite sum, which is not an element in $\VV.$ This contradicts to  the fact that $\dir\in{\rm Der}(\WL,\VV)$. The result follows.
\end{proof}

By now the proof of Proposition \ref{proposition} is completed.

\begin{lemm}\label{lemma5v}
Suppose $v\in\VV$ such that
$x\cdot v\in {\rm Im}(1-\tau)$ for all $x\in\WL.$ Then $v\in {\rm Im}(1-\tau)$.
\end{lemm}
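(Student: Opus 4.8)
The plan is to exploit the $\BZ$-grading of $\VV=\WL\otimes\WL$ together with the (now established) fact that $H^1(\WL,\VV)=0$, i.e.\ Proposition \ref{proposition}. Write $v=\sum_{\bm\in\BZ}v_\bm$ with $v_\bm\in\VV_\bm$; since the hypothesis and conclusion are both compatible with the grading (the adjoint action is homogeneous of degree $\bn$ when restricted to $\WL_\bn$, and $\mathrm{Im}(1-\tau)$ is a graded subspace), it suffices to treat each homogeneous component separately. So I would fix $\bm\in\BZ$ and assume $v\in\VV_\bm$ with $x\cdot v\in\mathrm{Im}(1-\tau)$ for all $x\in\WL$.

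First I would handle the case $\bm\ne\bz$. Choose, exactly as in the proof of Lemma \ref{lemm-Dn}, an element $\rho\in\texttt{T}=\mathrm{Span}_\C\{d_1,d_2\}$ with $\rho(\bm)\ne 0$. Then $\rho\cdot v=\rho(\bm)v$, and since $\rho\cdot v\in\mathrm{Im}(1-\tau)$ by hypothesis, dividing by $\rho(\bm)$ gives $v\in\mathrm{Im}(1-\tau)$ immediately. This is the easy case, and it reduces everything to $\bm=\bz$.

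For $\bm=\bz$ the idea is to produce a derivation and invoke $H^1=0$. Define $\dir:\WL\to\VV$ by $\dir(x)=x\cdot v$; this is exactly the inner derivation $v_{\rm inn}$, which is homogeneous of degree $\bz$. By hypothesis $\dir(x)\in\mathrm{Im}(1-\tau)$ for all $x$, so $\dir$ is in fact a derivation into the $\WL$-submodule $\mathrm{Im}(1-\tau)\subset\VV$. I would now argue that $\dir$, viewed as a derivation into $\mathrm{Im}(1-\tau)$, is inner: the cleanest route is to note that $\mathrm{Im}(1-\tau)=\wedge^2\WL$ is an $\WL$-submodule of $\VV$ which is a direct summand as a module (the complement being the symmetric part $S^2\WL$, and the adjoint action preserves both), so $H^1(\WL,\wedge^2\WL)$ is a direct summand of $H^1(\WL,\VV)=0$, hence vanishes. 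Therefore $\dir=w_{\rm inn}$ for some $w\in\wedge^2\WL=\mathrm{Im}(1-\tau)$, i.e.\ $x\cdot v=x\cdot w$ for all $x\in\WL$. Then $x\cdot(v-w)=0$ for all $x$, so by Lemma \ref{Legr} we get $v=w\in\mathrm{Im}(1-\tau)$, as desired. Combining the two cases and summing over $\bm$ yields $v\in\mathrm{Im}(1-\tau)$ in general.

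The main obstacle is the middle step in the $\bm=\bz$ case: justifying that a derivation landing in the submodule $\mathrm{Im}(1-\tau)$ is inner \emph{with an inner-producing element already in $\mathrm{Im}(1-\tau)$}, rather than merely in $\VV$. The decomposition $\VV=S^2\WL\oplus\wedge^2\WL$ as $\WL$-modules settles this, since it splits $H^1(\WL,\VV)$ accordingly; alternatively one can run the argument of Proposition \ref{proposition} keeping track of the $\tau$-symmetry throughout (every homogeneous piece of $\dir_\bz$ and every subtracted inner derivation can be chosen antisymmetric when the input $v$ is antisymmetric), but the module-decomposition argument is shorter and is the one I would write down.
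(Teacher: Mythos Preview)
Your argument is correct, and it is genuinely different from the paper's. The paper handles $\bm\neq\bz$ exactly as you do, but for $v\in\VV_\bz$ it proceeds by a direct computation: it writes $v$ explicitly in the basis, then successively uses $\DD\cdot v\in\mathrm{Im}(1-\tau)$, $\GG_{0,1}\cdot v\in\mathrm{Im}(1-\tau)$, $\GG_{1,0}\cdot v\in\mathrm{Im}(1-\tau)$, and $\EE_{\bz}\cdot v\in\mathrm{Im}(1-\tau)$ to force the remaining symmetric part of $v$ to vanish, subtracting antisymmetric pieces along the way. No appeal to Proposition~\ref{proposition} is made.

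Your route is shorter and more conceptual: the splitting $\VV=S^2\WL\oplus\wedge^2\WL$ as $\WL$-modules gives $H^1(\WL,\wedge^2\WL)\hookrightarrow H^1(\WL,\VV)=0$, so the inner derivation $v_{\rm inn}$, which by hypothesis takes values in $\wedge^2\WL$, equals $w_{\rm inn}$ for some $w\in\wedge^2\WL$; Lemma~\ref{Legr} then forces $v=w$. This avoids a second explicit calculation and makes the role of Proposition~\ref{proposition} transparent. The paper's approach, on the other hand, is self-contained in the sense that it does not rely on Proposition~\ref{proposition}, so Lemma~\ref{lemma5v} could in principle be stated and proved before it; it also exhibits concretely which elements $x\in\WL$ suffice to pin down $v$. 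Either approach is fine, and your observation that the homogeneous reduction is not even needed (the cohomological argument works globally) is a nice simplification.
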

\begin{proof} First note that $\WL\cdot {\rm Im}(1-\tau)\subset{\rm Im}(1-\tau).$ We shall prove that after a number of steps, by
replacing $v$ by $v - u$ for some $u\in {\rm Im}(1-\tau),$ the zero element is obtained and thus proving that $v\in{\rm Im}(1-\tau).$ Write $v=\sum_{{\bm\in\Z^2}}v_{\bm}.$ Obviously,
\begin{eqnarray}\label{eqrx}
v\in {\rm Im}(1-\tau)\ \,\Longleftrightarrow \ \,v_{\bm}\in {\rm Im}(1-\tau),\ \ \forall\,\,{\bm\in\BZ}.
\end{eqnarray}
For any $\bm\in\BZ^*$, choose $\rho\in\texttt{T}$ such that $\rho(\bm)\neq0.$ Then $$\rho\cdot v=\mbox{$\sum\limits_{\bm\in\BZ}$}\rho(\bm)v_{\bm}\in{\rm Im}(1-\tau),$$
which together with \eqref{eqrx} gives $v_{\bm}\in{\rm Im}(1-\tau)$. Thus by replacing $v$ by $v-\sum_{\bm\neq\bz}v_{\bm}$, we can suppose $v=v_{\bz}\in\VV_
{\bz}.$ Hence $v$ can be rewritten as  (cf.~statement after \eqref{first-})
{\small\begin{eqnarray*}
&&\ \mbox{$\sum\limits_{\bm}$}\big(\v^{ee}_{\bm}\EE_{\bm}\!\otimes\!\EE_{-\bm}
\!+\!\v^{ef}_{\bm}\EE_{\bm}\!\otimes\!\FF_{-\bm}\!+\!\v^{fe}_{\bm}\FF_{\bm}\!\otimes\!\EE_{-\bm}\!
+\!\v^{ff}_{\bm}\FF_{\bm}\!\otimes\!\FF_{-\bm}\big)
\!+\!\mbox{$\sum\limits_{\bk}$}\big(\v^{eg}_{\bk}\EE_{\bk}\!\otimes\!\GG_{-\bk}\\[-6pt]
&&+\v^{eh}_{\bk}\EE_{\bk}\!\otimes\!\HH_{-\bk}\!\!+\!\v^{he}_{\bk}\HH_{\bk}\!\otimes\!\EE_{-\bk}\!\!
+\!\v^{ge}_{\bk}\GG_{\bk}\!\otimes\!\EE_{-\bk}\!\!+\!\v^{fg}_{\bk}\FF_{\bk}\!\otimes\!\GG_{-\bk}
\!+\!\v^{fh}_{\bk}\FF_{\bk}\!\otimes\!\HH_{-\bk}\!+\!\v^{gf}_{\bk}\GG_{\bk}\!\otimes\!\FF_{-\bk}\\
&&+\v^{hf}_{\bk}\HH_{\bk}\!\otimes\!\FF_{-\bk}\!+\!\v^{gg}_{\bk}\GG_{\bk}\!\otimes\!\GG_{-\bk}
\!+\!\v^{gh}_{\bk}\GG_{\bk}\!\otimes\!\HH_{-\bk}+\v^{hg}_{\bk}\HH_{\bk}\!\otimes\!\GG_{-\bk}\!
+\!\v^{hh}_{\bk}\HH_{\bk}\otimes\HH_{-\bk}\big)\\
&&+\v^e_d\EE_{\bz}\otimes\DD\!+\!\v^e_{d_{1}}\EE_{\bz}\otimes\DD_{1}
\!+\!\v^e_{d_{2}}\EE_{\bz}\otimes\DD_{2}\!+\!\v^f_d\FF_{\bz}\otimes\DD
\!+\!\v^f_{d_{1}}\FF_{\bz}\otimes\DD_{1}\!+\!\v^f_{d_{2}}\FF_{\bz}\!\otimes\!\DD_{2}
\!+\v^d_e\DD\!\otimes\!\EE_{\bz}\\
&&+\v^d_f\DD\otimes\FF_{\bz}+\v^d_d\DD\!\otimes\!\DD+\v^d_{d_{1}}\DD\!\otimes\!\DD_{1}
+\v^d_{d_{2}}\DD\otimes\DD_{2}+\v^{d_{1}}_e\DD_{1}\!\otimes\!\EE_{\bz}\!+\!\v^{d_{1}}_f\DD_{1}\!\otimes\!\FF_{\bz}\!
+\!\v^{d_{1}}_d\DD_{1}\!\otimes\!\DD\\
&&+\v^{d_{1}}_{d_{1}}\DD_{1}\!\otimes\!\DD_{1}\!\!+\!\!\v^{d_{1}}_{d_{2}}\DD_{1}\!\otimes\!\DD_{2}\!
+\!\v^{d_{2}}_e\DD_{2}\!\otimes\!\EE_{\bz}\!+\!\v^{d_{2}}_f\DD_{2}\!\otimes\!\FF_{\bz}\!
+\!\v^{d_{2}}_d\DD_{2}\!\otimes\!\DD\!+\!\v^{d_{2}}_{d_{1}}\DD_{2}\!\otimes\!\DD_{1}
\!+\!\v^{d_{2}}_{d_{2}}\DD_{2}\!\otimes\!\DD_{2}.
\end{eqnarray*}\small}%
\noindent 
The fact that $\DD\cdot v\in{\rm Im}(1-\tau)$ implies
\begin{eqnarray*}
&&\v^{e,e}_{\bm}+\v^{e,e}_{-\bm}=\v^{f,f}_{\bm}+\v^{f,f}_{-\bm}= \v^{e,g}_{\bk}+\v^{g,e}_{-\bk}=0,\\
&&\v^{e,h}_{\bk}+\v^{h,e}_{-\bk}=\v^{f,g}_{\bk}+\v^{g,f}_{-\bk}= \v^{f,h}_{\bk}+\v^{h,f}_{-\bk}=\v^e_d+\v^d_e=0,\\
&&\v^e_{d_1}+\v^{d_1}_e=\v^e_{d_2}+\v^{d_2}_e=
\v^f_d\!+\!\v^d_f=\v^f_{d_1}+\v^{d_1}_f=\v^f_{d_2}+\v^{d_2}_f=0.
\end{eqnarray*}
Replacing $v$ by $v-u_1$, where $u_1$ is equal to
{\small
\begin{eqnarray*}
&&\mbox{$\sum\limits_{\bm}$}\big(\v^{ee}_{\bm}\EE\!_{\bm}\!\otimes\!\EE\!_{-\bm}
\!\!+\!\v^{ff}_{\bm}\FF\!_{\bm}\!\otimes\!\FF\!_{-\bm}
\!\!+\!\v^{fe}_{\bm}(\FF\!_{\bm}\!\otimes\!\EE\!_{-\bm}\!\!-\!\EE\!_{-\bm}\!\otimes\!\FF\!_{\bm})\big)
\!+\!\mbox{$\sum\limits_{\bk}$}\big(\v^{eg}_{\bk}(\EE\!_{\bk}\!\otimes\!\GG\!_{-\bk}\!\!-\!\GG\!_{-\bk}\!\otimes\!\EE\!_{\bk})\\[-6pt]
&&+\v^{eh}_{\bk}(\EE_{\bk}\!\otimes\!\HH_{-\bk}\!-\!\HH_{-\bk}\!\otimes\!\EE_{\bk})\!\!
+\!\v^{fg}_{\bk}(\FF_{\bk}\!\otimes\!\GG_{-\bk}\!-\!\GG_{-\bk}\!\otimes\!\FF_{\bk})
\!+\!\v^{fh}_{\bk}(\FF_{\bk}\!\otimes\!\HH_{-\bk}\!-\!\HH_{-\bk}\!\otimes\!\FF_{\bk})\\
&&+\!\v^{hg}_{\bk}(\HH_{\bk}\!\otimes\!\GG\!_{-\bk}\!-\!\GG\!_{-\bk}\!\otimes\!\HH_{\bk})\big)
\!+\!\v^e_d(\EE_{\bz}\!\otimes\!\DD\!\!-\!\DD\!\otimes\!\EE_{\bz})\!+\!v^e_{d_{1}}(\EE_{\bz}\!\otimes\!\DD_{1}\!\!-\!\DD_{1}\!\otimes\!\EE_{\bz})
\!+\!\v^e_{d_{2}}(\EE_{\bz}\!\otimes\!\DD_{2}\!\!-\!\DD_{2}\!\otimes\!\EE_{\bz})\\
&&+\!\v^f_d(\FF_{\bz}\otimes\DD-\DD\otimes\FF_{\bz})
\!+\!\v^f_{d_{1}}(\FF_{\bz}\otimes\DD_{1}\!-\DD_{1}\!\otimes\!\FF_{\bz})+
\!\v^f_{d_{2}}(\FF_{\bz}\!\otimes\!\DD_{2}-\DD_{2}\!\otimes\!\FF_{\bz})\ \in\ {\rm Im}(1-\tau),
\end{eqnarray*}
\small}
we can simplify $v$  as
{\small\begin{eqnarray*}
&&\mbox{$\sum\limits_{\bm}$}\big(\v^{ef}_{\bm}\EE_{\bm}\!\otimes\!\FF_{-\bm}\big)
+\mbox{$\sum\limits_{\bk}$}\big(\v^{gg}_{\bk}\GG_{\bk}\!\otimes\!\GG_{-\bk}
\!+\!\v^{hh}_{\bk}\HH_{\bk}\otimes\HH_{-\bk}\big)
+\mbox{$\sum\limits_{\bk}$}\big(\!\v^{gh}_{\bk}\GG_{\bk}\!\otimes\!\HH_{-\bk}\big)\\[-6pt]
&&+\v^d_d\DD\otimes\DD+\v^d_{d_{1}}\DD\otimes\DD_{1}
+\v^d_{d_{2}}\DD\otimes\DD_{2}+\v^{d_{1}}_d\DD_{1}\otimes\DD
+\v^{d_{1}}_{d_{1}}\DD_{1}\otimes\!\DD_{1}+\v^{d_{1}}_{d_{2}}\DD_{1}\otimes\DD_{2}\\
&&+\v^{d_{2}}_d\DD_{2}\otimes\DD+\v^{d_{2}}_{d_{1}}\DD_{2}\otimes\DD_{1}
+\v^{d_{2}}_{d_{2}}\DD_{2}\otimes\DD_{2},
\end{eqnarray*}
\small}
\noindent where the coefficients are all in $\C[q^{\pm1}]$. The fact that $\GG_{0,1}\cdot v\in{\rm Im}(1-\tau)$ forces
\begin{eqnarray*}
q^{m_1}\v^{ef}_{m_1,m_2-1}-\v^{ef}_{m_1,m_2}=(q^{m_1}-1)\v^{gh}_{m_1,m_2-1}=
\v^{d_2}_{d_2}=\v^{d}_{d_2}+\v^{d_2}_d=\v^{d_2}_{d_1}+\v^{d_1}_{d_2}=0,
\end{eqnarray*}
which in particular gives $\v^{ef}_{m,n}=0$. Replacing $v$ by $v-u_1$, where $u_1=\v^d_{d_2}(\DD\otimes\DD_2-\DD_2\otimes\DD)+\v^{d_2}_{d_1}(\DD_2\otimes\DD_1-\DD_1\otimes\DD_2)$, we have $\v^d_{d_2}=\v^{d_2}_d=\v^{d_2}_{d_1}=\v^{d_1}_{d_2}=0$.
The fact that $\GG_{1,0}\cdot v\in{\rm Im}(1-\tau)$ forces
$
(q^{m_2}-1)\v^{g,h}_{m_1-1,m_2}=\v^{d_1}_{d_1}=\v^{d}_{d_1}+\v^{d_1}_d=0,
$ 
which gives $\v^{g,h}_{m,n}=0$ for $n\in\Z^*$. Replacing $v$ by $v-u$, where $u=\v^d_{d_1}(\DD\otimes\DD_1-\DD_1\otimes\DD)$, one can rewrite $v$ as
$
v=\mbox{$\sum_{\bk}$}\big(\v^{gg}_{\bk}\GG_{\bk}\otimes\GG_{-\bk}
+\v^{hh}_{\bk}\HH_{\bk}\otimes\HH_{-\bk}\big)+\v^d_d\DD\otimes\DD.
$ 
The fact that $\EE_{\bz}\cdot v\in{\rm Im}(1-\tau)$ forces
$
\v^d_d=\v^{gg}_{\bk}+\v^{gg}_{-\bk}=\v^{hh}_{\bk}+\v^{hh}_{-\bk}=0.
$ 
Hence $v\in{\rm Im}(1-\tau).$
\end{proof}

 \ni{\it Proof of Theorem \ref{main}.}\ \ Let $(\WL
,[\cdot,\cdot],\dir)$ be a Lie bialgebra structure on $\WL$. By
(\ref{bLie-d}), (\ref{deriv}) and Proposition \ref{proposition},
$\D=\D_r$ is defined by (\ref{D-r}) for some $r\in\WL\otimes\WL.$ By
(\ref{cLie-s-s}), ${\rm Im}\,\D\subset{\rm Im}(1-\tau).$ By
Lemma \ref{lemma5v}, $r\in{\rm Im}(1-\tau).$ Then (\ref{cLie-s-s}),
Lemma \ref{some}(2) and Corollary \ref{coro1} show that $c(r)=0.$ Thus $(\WL ,[\cdot,\cdot],\D)$ is a
triangular coboundary Lie bialgebra.\QED\vskip12pt

\noindent{\bf Acknowledgements}\ \ The authors would sincerely like to thank Professor Yucai Su for his supervision and invaluable comments.\vskip8pt

\footnotesize

\end{CJK*}

\begin{thebibliography}{9999}\vskip0pt\small
\parindent=2ex\parskip=-1pt\baselineskip=-1pt
\small

\bibitem{AG} B.  Allison, Y. Gao, The root system and the core of an extended
affine Lie algebra, {\it Selecta Math. (N.S.)} {\bf 7}(2) (2001), 149--212.

\bibitem{AABGP} B. Allison, S. Azam, S. Berman, Y. Gao, A. Pianzola, Extended affine Lie algebras and their root systems, {\it Mem. Amer. Math. Soc.} {\bf 126}(603) (1997).

\bibitem{B} Y. Billig, Representations of toroidal extended affine Lie algebras, {\it J. Algebra} {\bf 308} (2007), 252--269.

\bibitem{BGK} S. Berman, Y. Gao, Y. Krylyuk, Quantum tori and the structure of elliptic quasi-simple Lie algebras, {\it J. Funct. Anal.} {\bf 135} (1996) 339--389.

\bibitem{BGKE} S. Berman, Y. Gao, Y. Krylyuk,  E. Neher, The alternative torus and the structure of elliptic quasisimple Lie algebras of type $A_2$, {\it Trans. Amer. Math. Soc.} {\bf 347}(11) (1995), 4315--4363.

\bibitem{BL} Y. Billig, M. Lau, Irreducible modules for extended affine Lie algebras, {\it J. Algebra} {\bf 327} (2011) 208--235.

\bibitem{CS} Y. Cheng, Y. Shi, Lie bialgebra structures on the $q$-analog Virasoro-like algebras, {\it Comm. Algebra} {\bf 37}(4) (2009), 1264--1274.

\bibitem{D1} V. Drinfeld, Constant quasiclassical solutions of the
Yang-Baxter quantum equation, {\it Soviet Math. Dokl.} {\bf28}(3) (1983),
667--671.

\bibitem{D2} V. Drinfeld, Quantum groups, in: {\it Proceeding of the
International Congress of Mathematicians}, Vol. 1, 2, Berkeley,
Calif. 1986, Amer. Math. Soc., Providence, RI, 1987, 798--820.

\bibitem{G1} Y. Gao, Representations of extended affine Lie algebras coordinatized by certain quantum tori, {\it Compositio Math.} {\bf 123}(1) (2000), 1--25\vs{-7pt}.

\bibitem{G2} Y. Gao, Fermionic and bosonic representations of the extended affine Lie algebra \mbox{\footnotesize$\widetilde{\frak{gl}_N(\mathbb{C}_q)}$}, {\it Cananian Math. Bull.} {\bf 45} (2002), 623--633\vs{-7pt}.

\bibitem{GZ} Y. Gao, Z. Zeng, Hermitian representations of the extended affine Lie algebra \mbox{\footnotesize$\widetilde{\frak{gl}_2(\mathbb{C}_q)}$}, {\it Adv. Math.} {\bf 207}(1) (2006), 244--265.

\bibitem{HLS} J. Han, J. Li, Y. Su, Lie bialgebra structures on the
    Schr\"odinger-Virasoro Lie algebra, {\it J. Math. Phys} {\bf 50} (2009), 083504.

\bibitem{HT} R. H{\o}egh-Krohn, B. Torresani, Classification and construction of quasi-simple Lie algebras, {\it J. Funct. Anal.} {\bf 89} (1990), 106--136.

\bibitem{JM} C. Jiang, D. Meng, The derivation algebra of the associative algebra $\C_q[X,Y,X^{-1},Y^{-1}]$, {\it Comm. Algebra} {\bf 26}(6) (1998), 1723--1736.

\bibitem{LS} W. Lin, Y. Su, Modules for the core of extended affine Lie algebras of type $A_1$ with coordinates in rank 2 quantum tori, {\it Pacific J. Math.} {\bf 242}(1) (2009), 143--166.

\bibitem{LSX} J. Li, Y. Su, B. Xin, Lie bialgebras of a family of Block
type, {\it Chinese Annals of Math. B} {\bf 29} (2008), 487--500.

\bibitem{M} K. Mini, Integrable irreducible highest weight modules for $sl_2(\C_p[x^{\pm 1},y^{\pm 1}])$, {\it Osaka J. Math.} {\bf 41} (2004), 295--326.

\bibitem{Mi} W. Michaelis, A class of infinite-dimensional Lie
bialgebras containing the Virasoro algebras, {\it Adv. Math.}
{\bf 107} (1994), 365--392.

\bibitem{MY} Y.I. Manin, Topics in noncommutative geometry, Princeton Univ. Press, 1991.

\bibitem{N} E. Neher, Extended affine Lie algebras, {\it C. R. Math. Acad. Sci. Soc. R. Can.} {\bf 26} (2004) 90--96.

\bibitem{NT} S.H. Ng, E.J. Taft, Classification of the Lie bialgebra
structures on the Witt and Virasoro algebras, {\it J. Pure Appl.
Alg.} {\bf 151} (2000), 67--88.

\bibitem{SS} G. Song, Y. Su, Lie bialgebras of
generalized Witt type, {\it Science in China A}
{\bf 49}(4) (2006), 533--544.

\bibitem{SS1}	G. Song, Y. Su, B. Xin,
Quantization of Hamiltonian-type Lie algebras, {\it Pacific J. Math.} {\bf240} (2009), 371--381.

\bibitem{SZ} Y. Su, K. Zhao, Generalized Virasoro and super-Virasoro algebras and modules of the intermediate series, {\it J. Algebra} {\bf 252} (2002), 1--19.

\bibitem{T} E.J. Taft, Witt and Virasoro algebras as Lie bialgebras, {\it
J. Pure Appl. Algebra} {\bf 87} (1993), 301--312.

\bibitem{WSS} Y. Wu, G. Song, Y. Su, Lie bialgebras of generalized
Virasoro-like type, {\it Acta Mathematica Sinica, English Series}
{\bf 22} (2006), 1915--1922.

\bibitem{WSS1} Y. Wu, G. Song, Y. Su, Lie bialgebras of generalized Witt type. II.,
{\it Comm. Algebra} {\bf35} (2007), 1992--2007.






\bibitem{YS}X. Yue, Y. Su, Lie bialgebra structures on Lie algebras of
generalized Weyl type, {\it Comm. Algebra} {\bf36} (2008), 1537--1549.


\end{thebibliography}
\end{document}